\title{Subsquares in random Latin squares and rectangles}
\thanks{This project was
supported by the NSF grants \#1745583, \#1851843, \#2244427 and the GaTech
College of Sciences}
\author{Alexander Divoux}
\address{School of Mathematics, Georgia Institute of Technology, Atlanta, GA 30332, USA}
\email{adivoux3@gatech.edu}
\author{Tom Kelly}
\thanks{Kelly's research was supported by the National Science Foundation under Grant No. DMS-2247078.}
\address{School of Mathematics, Georgia Institute of Technology, Atlanta, GA 30332, USA}
\email{tom.kelly@gatech.edu}
\author{Camille Kennedy}
\address{Department of Mathematics, Northwestern University, Evanston, IL 60201, USA}
\email{camillekennedy2025@u.northwestern.edu}
\author{Jasdeep Sidhu}
\address{Department of Mathematics, Stanford University, Stanford, CA 94305, USA}
\email{jasdeep6@stanford.edu}
\begin{document}

\begin{abstract}
A $k \times n$ partial Latin rectangle is \textit{$C$-sparse} if the number of nonempty entries in each row and column is at most $C$ and each symbol is used at most $C$ times. We prove that the probability a uniformly random $k \times n$ Latin rectangle, where $k < (1/2 - \alpha)n$, contains a $\beta n$-sparse partial Latin rectangle with $\ell$ nonempty entries is $(\frac{1 \pm \varepsilon}{n})^\ell$ for sufficiently large $n$ and sufficiently small $\beta$. Using this result, we prove that a uniformly random order-$n$ Latin square asymptotically almost surely has no Latin subsquare of order greater than $c\sqrt{n\log n}$ for an absolute constant $c$.
\end{abstract}

\maketitle

\section{Introduction}\label{sec:intro}

An \textit{order-n Latin square} is an $n \times n$ array of $n$ symbols, such that each row and each column contains each symbol exactly once. For $m < n$, an \textit{order-m subsquare} of an order-$n$ Latin square is an $m \times m$ array induced by a selection of a set of $m$ rows and a set of $m$ columns. Such a subsquare is \textit{Latin} if it is itself a Latin square. The existence and number of order-$m$ Latin subsquares is a structural problem often explored in a probabilistic sense. Let $\mathcal{L}_n$ denote the set of order-$n$ Latin squares with symbol set $[n]$. McKay and Wanless \cite{MW99} proposed the following 1999 conjecture on the number of order-$m$  Latin subsquares in an order-$n$ random Latin square.

\begin{conjecture}[McKay and Wanless \cite{MW99}]\label{conj:McKayWanless}
If $\randLS \in \mathcal{L}_n$ is an order-$n$ Latin square chosen uniformly at random, then as $n \rightarrow \infty$,
\begin{enumerate}[label=(\alph*)]
    \item\label{3x3inLS} the expected number of order-$3$ Latin subsquares of $\randLS$ tends to $1/18$, and 
    \item\label{>3inLS} $\randLS$ asymptotically almost surely contains no Latin subsquare of order greater than $3$. 
\end{enumerate}
\end{conjecture}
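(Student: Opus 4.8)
The plan is to deduce both parts of Conjecture~\ref{conj:McKayWanless} from the sparse estimate above by the first moment method. Let $L_m$ denote the number of order-$m$ Latin squares. For any $m \le n/2$, the expected number of order-$m$ Latin subsquares of $\randLS$ equals $\sum_{R,C,M}\Pr[\,\randLS\text{ restricted to }R\times C\text{ equals }M\,]$, the sum ranging over all sets $R$ of $m$ rows, sets $C$ of $m$ columns, and order-$m$ Latin squares $M$ with entries in $[n]$; there are $\binom{n}{m}^{3}L_m$ terms. Viewed as a partial Latin rectangle, each such $M$ has $\ell = m^2$ nonempty entries, exactly $m$ in every row, $m$ in every column, and each symbol used $m$ times, so it is $\beta n$-sparse as soon as $m \le \beta n$.

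\textbf{Part~\ref{3x3inLS}.} Here $m = 3$, so $\ell = 9$ and $L_3 = 12$. As $\ell$ is bounded, each probability in the sum equals $(1+o(1))n^{-9}$; this is the quoted estimate for bounded $\ell$, in the form valid for Latin squares (equivalently, a known asymptotic-independence estimate for a fixed set of entries of a random Latin square). Hence the expected number of order-$3$ Latin subsquares is
\[
12\binom{n}{3}^{\!3}(1+o(1))n^{-9} = 12 \cdot \frac{(n^3/6)^3}{n^9}\,(1+o(1)) = \frac{1}{18} + o(1),
\]
which is part~\ref{3x3inLS}.

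\textbf{Part~\ref{>3inLS}.} Fix $m$ with $4 \le m \le \beta n$. The sparse estimate bounds each probability in the sum by $((1+\varepsilon)/n)^{m^2}$; together with $\binom{n}{m} \le (en/m)^m$ and the Egorychev--Falikman and Br\'egman--Minc asymptotic $L_m = ((1+o(1))\,m/e^2)^{m^2}$, this makes the expected number of order-$m$ Latin subsquares at most $\exp(O(m\log n) - \Omega(m^2\log(n/m)))$, which is $o(1)$ for every $m \ge 4$; summing over $4 \le m \le \beta n$ gives $O(n^{-4}) \to 0$. Two routine reductions handle the denser range. First, a classical argument shows that an order-$m$ subsquare forces $m \le n/2$: the $m$ rows of the subsquare, restricted to the remaining $n-m$ columns, form an $m\times(n-m)$ Latin rectangle on the $n-m$ complementary symbols. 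Second, for $\beta n \le m \le n/2$ the Br\'egman--Minc upper bound on the number of Latin squares in which a given block $M$ occurs, combined with the Egorychev--Falikman lower bound for $|\mathcal{L}_n|$, already shows that the expected number of order-$m$ Latin subsquares is $\exp(-\Omega(n^2)) \to 0$ --- with no appeal to the sparse estimate or to any comparison between Latin squares and Latin rectangles. Thus $\randLS$ a.a.s.\ has no Latin subsquare of order $m$ for any fixed $m \ge 4$, and --- this recovers the corollary stated in the abstract --- for any $m \ge c\sqrt{n\log n}$.

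The remaining case, and where I expect essentially all of the difficulty to lie, is the intermediate range $4 \le m \le c\sqrt{n\log n}$. The first moment there still only requires the bound $(C/n)^{m^2}$ on a configuration's probability for an absolute constant $C$; but deriving the Latin-square estimate from the Latin-rectangle statement --- by expanding a configuration's probability over the number $N(T)$ of completions of an $m$-row Latin rectangle $T$ to a Latin square --- loses a factor $\exp(\Theta(n\log^2 n))$, precisely the lower-order discrepancy between the Br\'egman--Minc upper bound and the Egorychev--Falikman lower bound for permanents, and this loss swamps the $n^{-m^2}$ saving exactly when $m^2 = O(n\log n)$. I would attack this either by proving a Latin-\emph{square} analogue of the sparse estimate directly --- adapting its proof to the square setting so that this permanent-bound discrepancy never enters --- or, less ambitiously, by sharpening the upper bound on $N(T)$ to within a factor $\exp(o(n\log^2 n))$ of its typical value, for the $m$-row Latin rectangles $T$ that extend some order-$m$ block.
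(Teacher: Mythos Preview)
The statement you are attempting to prove is a \emph{conjecture} in the paper, and the paper does not prove it; indeed the authors say explicitly that Conjecture~\ref{conj:McKayWanless} ``remains wide open.'' What the paper actually establishes is the weaker Theorem~\ref{thm:nolss} (no Latin subsquare of order at least $c\sqrt{n\log n}$) and the rectangle analogue Theorem~\ref{thm:MWforLRs}. So there is no ``paper's own proof'' to compare against, and your proposal should be read as an outline of a conditional argument plus an honest assessment of the remaining obstruction.

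The genuine gap is in your treatment of part~\ref{3x3inLS} and of the range $4\le m\le \beta n$ in part~\ref{>3inLS}. In both places you invoke ``the quoted estimate for bounded $\ell$, in the form valid for Latin squares.'' The paper proves no such estimate: Theorem~\ref{thm:rectanglepatternversion} gives $\Pr[P\subseteq\randLR]=((1\pm\eps)/n)^\ell$ only for $k<(1/2-\alpha)n$, and the paper states the Latin-square case ($k=n$) as the open Conjecture~\ref{conj:patternversion}, remarking that its truth for $\ell=9$ would imply Conjecture~\ref{conj:McKayWanless}\ref{3x3inLS}. So your proof of part~\ref{3x3inLS} is circular relative to the paper's results, and your first-moment bound for $4\le m\le\beta n$ likewise rests on an unproved hypothesis. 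Your claim that permanent bounds alone dispose of $\beta n\le m\le n/2$ with an $\exp(-\Omega(n^2))$ saving is plausible but is not what the paper does; the paper handles all $m\ge n^{3/4}$ via Lemma~\ref{lem:prob-of-large-lss}, which still goes through the rectangle estimate and the Kwan--Sudakov comparison.

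Where your analysis is on target is the diagnosis of the bottleneck. Your observation that passing from the rectangle estimate to squares via the ratio of completion counts costs a factor $\exp(\Theta(n\log^2 n))$ is exactly the content of Lemma~\ref{lem:extensionbound} and is precisely why the paper's method stalls at $m\approx\sqrt{n\log n}$: the saving $(m/n)^{m^2-3m}$ from Lemma~\ref{lem:prob-of-small-lss} beats $\exp(Cn\log^2 n)$ only once $m^2\log(n/m)\gg n\log^2 n$. Your two proposed attacks --- prove the square version of the sparse estimate directly, or sharpen the completion-count ratio to $\exp(o(n\log^2 n))$ --- are the natural next steps, and the paper's concluding remarks (and its statement of Conjecture~\ref{conj:patternversion}) point in the same direction.
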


McKay and Wanless \cite{MW99} also conjectured that asymptotically almost surely a uniformly random order-$n$ Latin square $\randLS \in \cL_n$ contains close to $n^2 / 4$ order-$2$ Latin subsquares and proved a lower bound of $n^{3/2 + o(1)}$.  In 2008, Cavenagh, Greenhill, and Wanless \cite{CGW08} proved an upper bound of $9n^{5/2}/2$.  In 2018, Kwan and Sudakov \cite{KS18} improved the lower bound to $(1 - o(1))n^2 / 4$, matching the conjecture, and showed that the expected number of order-$2$ Latin subsquares is at most $n^2/2$. Recently, Kwan, Sah, and Sawhney \cite{KS22} settled this conjecture, and Kwan, Sah, Sawhney, and Simkin \cite{KS23} strengthened this result by providing fairly precise estimates for the tail probabilities of the number of order-$2$ Latin subsquares of $\randLS$.

Despite this progress, Conjecture~\ref{conj:McKayWanless} remains wide open.  
As part of an effort to prove Conjecture~\ref{conj:McKayWanless}\ref{>3inLS}, it is natural to seek the smallest $m$, perhaps as a function of $n$, such that asymptotically almost surely a uniformly random order-$n$ Latin square $\randLS \in \cL_n$ contains no Latin subsquare of order greater than $m$.
When they proposed this conjecture, McKay and Wanless proved that asymptotically almost surely $\randLS$ contains no Latin subsquare of order $n/2$ or larger, but no better bound has been proved to date.  
In this paper, we improve this bound, as follows.

\begin{thm}\label{thm:nolss}
    There exists an absolute constant $c > 0$ such that a uniformly random order-$n$ Latin square contains no order-$m$ Latin subsquare, for $m \geq c\sqrt{n\log n}$, asymptotically almost surely.
\end{thm}

To prove Theorem \ref{thm:nolss}, we first prove a result about Latin rectangles. For $k \leq n$, a $k\times n$ \textit{Latin rectangle} is a $k \times n$ array of $n$ symbols, such that each row contains each symbol exactly once and each column contains each symbol at most once.
Let $\setLR$ be the set of $k \times n$ Latin rectangles with symbol set $[n]$. 

Considering Latin rectangles can sometimes be useful for proving results about Latin squares because Latin rectangles are more ``flexible'' structures.  Latin rectangles are also interesting objects of study in their own right.  We generalize Conjecture~\ref{conj:McKayWanless}\ref{3x3inLS} and \ref{conj:McKayWanless}\ref{>3inLS} to all $k \leq n$ as follows.

\begin{conjecture}\label{conj:MWforLRs}
    For all $\eps > 0$, the following holds for sufficiently large $n$ and $k \leq n$.  If $\randLR \in \setLR$ is a $k \times n$ Latin rectangle chosen uniformly at random, then 
    \begin{enumerate}[(a)]
        \item the expected number of order-3 Latin subsquares of $\randLR$ is $\left.(1\pm\eps)\binom{k}{3}\middle/(3n^3)\right.$, and
        \item $\randLR$ contains no Latin subsquare of order greater than 3 with probability at least $1 - \eps$.
    \end{enumerate}
\end{conjecture}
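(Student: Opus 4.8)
The plan is to deduce both parts of Conjecture~\ref{conj:MWforLRs} from first-moment computations, using as the main input the estimate announced in the abstract: for a uniformly random $\randLR \in \setLR$ with $k < (1/2-\alpha)n$, the probability that $\randLR$ contains a fixed $\beta n$-sparse partial Latin rectangle $P$ with $\ell$ nonempty entries is $(\tfrac{1\pm\eps}{n})^{\ell}$, provided $\beta$ is small and $n$ large (call this \emph{the sparsity estimate}); in particular the approach below proves the conjecture for $k < (1/2-\alpha)n$, and the regime of larger $k$ lies beyond its reach. The set-up is common to both parts. An order-$m$ Latin subsquare of a $k \times n$ Latin rectangle is determined by a choice of $m$ rows, $m$ columns, $m$ symbols, and one of the $L_m$ order-$m$ Latin squares on them, where $L_m$ is the number of order-$m$ Latin squares; each such datum corresponds to a partial Latin rectangle $P$ with $\ell = m^{2}$ nonempty entries, and $P$ is $m$-sparse since every row, column, and symbol of $P$ is used exactly $m$ times. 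There are $\binom{k}{m}\binom{n}{m}^{2}L_m$ such data, and ``$\randLR$ contains the corresponding subsquare'' is exactly the event ``$\randLR \supseteq P$''.

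For part (a) I would take $m = 3$, so $\ell = 9$ and $L_3 = 12$; for $n$ large $P$ is $3$-sparse, hence $\beta n$-sparse, so the sparsity estimate gives $\Pr[\randLR \supseteq P] = (\tfrac{1\pm\eps'}{n})^{9}$ for any prescribed $\eps'$. By linearity of expectation the expected number of order-$3$ subsquares of $\randLR$ equals $\binom{k}{3}\binom{n}{3}^{2}\cdot 12 \cdot (\tfrac{1\pm\eps'}{n})^{9}$, and since $\binom{n}{3} = (1\pm o(1))n^{3}/6$ and $12/6^{2} = 1/3$, choosing $\eps'$ small and $n$ large makes this $(1\pm\eps)\binom{k}{3}/(3n^{3})$.

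For part (b) I would union bound over the order $m \ge 4$ of a hypothetical subsquare, so it suffices to show that the expected number of order-$m$ Latin subsquares of $\randLR$, summed over $m \ge 4$, tends to $0$. For $4 \le m \le \beta n$ the pattern $P$ above is $m$-sparse and hence $\beta n$-sparse, so the sparsity estimate applies with $\ell = m^{2}$ and bounds the expected number of order-$m$ subsquares by $\binom{n}{m}^{3}L_m(\tfrac{1+\eps'}{n})^{m^{2}}$ (using $\binom{k}{m} \le \binom{n}{m}$). Inserting $\binom{n}{m} \le (en/m)^{m}$ and the Stirling-type bound $L_m \le (m!)^{m} \le (m/e)^{m^{2}}e^{O(m\log m)}$, the factor $(m/(en))^{m^{2}} \le (\beta/e)^{m^{2}}$ dominates, giving a bound that is $O(n^{-4})$ at $m = 4$ and decays super-exponentially in $m$; a routine split of the range ($m = 4$ alone; then $5 \le m \le M$ for a large constant $M$, each term tending to $0$; then $\sum_{m > M}$, which is $O(e^{-M})$) yields $\sum_{4 \le m \le \beta n} = O(n^{-4}) = o(1)$, far stronger than the $1-\eps$ demanded.

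The remaining range $\beta n < m \le k$ — nonempty only for ``wide'' Latin rectangles and \emph{not} covered by the sparsity estimate, whose hypotheses require $\beta n$-sparsity — is the main obstacle. Here I would argue directly, in the spirit of McKay and Wanless's proof that a random order-$n$ Latin square has no subsquare of order at least $n/2$: an order-$m$ subsquare on rows $R$, columns $C$, symbols $S$ forces the restriction of $\randLR$ to $R \times ([n]\setminus C)$ to be an $m \times (n-m)$ Latin rectangle on the $n-m$ symbols outside $S$, the restriction to $([k]\setminus R) \times C$ to be a $(k-m) \times m$ array over those same symbols with distinct rows and columns, and the remaining $(k-m) \times (n-m)$ block to be similarly constrained; bounding the number of completions of these blocks by permanent/entropy estimates for Latin rectangles, then multiplying by $\binom{k}{m}\binom{n}{m}^{2}L_m$ and dividing by $|\setLR|$, should make the expected number of order-$m$ subsquares exponentially small throughout this range. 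The difficulty is that these completion estimates degrade as $m/n \to 1$, so carrying this out needs $k \le (1-\alpha)n$ — with a genuinely new idea required to reach $k = (1-o(1))n$ — together with sufficiently precise Latin-rectangle counts to cover the entire window $\beta n < m \le k$; this regime, together with the $k < (1/2-\alpha)n$ restriction already needed for part (a) and the small-subsquare case of (b), is why Conjecture~\ref{conj:MWforLRs} in the full range $k \le n$ remains hard.
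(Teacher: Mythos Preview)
First, a framing remark: Conjecture~\ref{conj:MWforLRs} is stated as a conjecture in the paper and is \emph{not} proved for all $k\le n$; only the restricted range $k<(1/2-\alpha)n$ is established (Theorem~\ref{thm:MWforLRs}). You correctly flag this restriction, so the relevant comparison is between your argument and the paper's proof of Theorem~\ref{thm:MWforLRs}.

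For part~(a) and for part~(b) in the regime $4\le m\le \beta n$, your plan coincides with the paper's: linearity of expectation plus the sparsity estimate applied to the full $m\times m$ pattern, followed by a union bound over $m$. The paper packages this as Corollary~\ref{cor:SSexpectation} and Lemma~\ref{lem:prob-of-small-lss}; your use of $L_m$ is slightly sharper than the paper's crude $m^{m^2}$ bound, but the conclusion is the same.

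The genuine gap is in the large-$m$ range $\beta n<m\le k$. You assert this range is ``not covered by the sparsity estimate'' and propose a separate permanent/entropy argument in the style of McKay--Wanless, while conceding that the completion estimates may degrade. The paper avoids this entirely with a short trick you missed: rather than applying the sparsity estimate to the full order-$m$ subsquare, apply it to an $r\times r$ sub-block with $r=\lceil n^{3/4}\rceil$. Any order-$m$ Latin subsquare contains such a block, the block is $r$-sparse (hence $\beta n$-sparse for large $n$), and it uses at most $m$ symbols. Thus $SS_m(\randLR)\le SS_{r,m}(\randLR)$, and Lemma~\ref{lem:expectation-of-lss-in-rectangle} gives
\[
\Expect{SS_{r,m}(\randLR)}\le \left(\frac{(1+\eps)m}{n}\right)^{r^2}\binom{n}{r}\binom{k}{r}\binom{n}{m}\le \left(\frac{(1+\eps)m}{n}\right)^{n^{3/2}}8^n.
\]
Since $m\le k<(1/2-\alpha)n$ forces $m/n\le 1/2$, the base is at most $(1+\eps)/2<1$ and the whole expression is $\exp(-\Omega(n^{3/2}))$. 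This single observation (Lemma~\ref{lem:prob-of-large-lss}) disposes of all $m\ge n^{3/4}$ at once, with no appeal to completion counts or entropy bounds; your proposed direct argument is unnecessary in the range the paper actually treats.
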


We confirm this conjecture for $k \leq (1/2 - o(1))n$ in the following theorem. 

\begin{thm}\label{thm:MWforLRs}
    For every $\eps, \alpha > 0$, the following holds for sufficiently large $n$ and $k < (1/2 - \alpha)n$.  If $\randLR \in \setLR$ is a $k \times n$ Latin rectangle chosen uniformly at random, then 
    \begin{enumerate}[(a)]
        \item\label{3x3inLR} the expected number of order-3 Latin subsquares of $\randLR$ is $\left.(1\pm\eps)\binom{k}{3}\middle/(3n^3)\right.$, and
        \item\label{>4inLR} $\randLR$ contains a Latin subsquare of order greater than 3 with probability at most $O(n^{-4})$.
    \end{enumerate}
\end{thm}

We prove Theorems~\ref{thm:nolss} and \ref{thm:MWforLRs} from a more general result relating to the degree of independence of entries in random Latin rectangles. 
For $k \leq n$, a $k \times n$ \textit{partial Latin rectangle} is a partially populated $k \times n$ array of $n$ symbols where each symbol appears in each row and column at most once. A \textit{partial order-$n$ Latin square} is a partial $n \times n$ Latin rectangle. 

Let $\randLS$ be a uniformly random order-$n$ Latin square with symbol set $[n]$. By symmetry, the probability $\Prob{\randLR_{i, j} = k}$ that row $i$ and column $j$ of $\randLS$ contains symbol $k$ is $1/n$ for every $i,j,k \in [n]$. However, for $i,j,k,i',j',k' \in [n]$ the events $\randLR_{i,j} = k$ and $\randLR_{i',j'}=k'$ are not independent.  For instance, if $i = i'$ and $k = k'$, then these events are mutually exclusive. Nevertheless, for a sufficiently small set of such events, the events should be close to being mutually independent if they are not mutually exclusive.  That is, for a partial Latin square $P$ with $\ell$ nonempty entries, we should expect $\randLS$ to contain $P$ with probability close to $1 / n^\ell$.  For example, if $i = i'$ and $j \neq j'$ and $k \neq k'$, then the probability that $\randLR_{i, j} = k$ and $\randLR_{i',j'} = k'$ is $1/(n(n - 1)) \approx 1 / n^2$.  However, for $\ell > 2$, this ``probabilistic heuristic'' is challenging to prove, and indeed, if it is true for $\ell = 9$, then it implies Conjecture~\ref{conj:McKayWanless}\ref{3x3inLS}.

This heuristic motivates the numbers in Conjecture \ref{conj:MWforLRs}\ref{3x3inLR}. For a uniformly random $\randLR \in \setLR$, there are $\binom{k}{3}\binom{n}{3}\binom{n}{3}$ ways to choose $3$ rows, columns, and symbols of $\randLR$. There are $12$ ways for the corresponding subsquare to be Latin, and by the probabilistic heuristic there is approximately a $1/n^9$ probability of each of these events occurring. Hence by the linearity of expectation, the expected number of order-$3$ Latin subsquares of $\randLR$ should be near $12\binom{k}{3}\binom{n}{3}\binom{n}{3}/n^9 \approx \binom{k}{3}/(3n^3)$ for sufficiently large $n$. Conjecture \ref{conj:MWforLRs}\ref{>4inLR} is similarly motivated. However, proving approximate mutual independence for a constant $\ell$, or even up to $\ell = o(n)$ entries, would not be sufficient for proving Theorem~\ref{thm:MWforLRs}\ref{>4inLR}.  To that end, we call a partial Latin rectangle \textit{$C$-sparse} when the number of populated entries in each row and column is at most $C$, and each symbol is used at most $C$ times.  We believe the approximate independence for $\randLR$ to contain entries of a $C$-sparse partial Latin rectangle holds for $C = o(n)$.  We formalize this probabilistic heuristic with the following conjecture.

\begin{conjecture}\label{conj:patternversion} 
    For every $\eps > 0$, there exists $\beta > 0$ such that the following holds for sufficiently large $n$ and for all $k \leq n$. If $P$ is a $\beta n$-sparse $k \times n$ partial Latin rectangle with $\ell$ nonempty entries and $\randLR \in \setLR$ is chosen uniformly at random, then 
\[
    \left(\frac{1 - \eps}{n}\right)^\ell \leq \Prob{P \subseteq \randLR} \leq \left(\frac{1 + \eps}{n}\right)^\ell.
\]
\end{conjecture}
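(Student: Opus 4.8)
The plan is to recast the statement as a ratio of enumeration problems and to attack it by combining switching arguments (which work when the density of filled cells stays below one half --- the regime already handled in this paper) with more powerful enumeration technology for the genuinely dense range. Write $N_k:=|\setLR|$ for the number of $k\times n$ Latin rectangles and $N_k(P)$ for the number of them containing $P$, so that $\Prob{P\subseteq\randLR}=N_k(P)/N_k$; the task is to show $\log N_k(P)=\log N_k-\ell\log n+O(\eps\ell)$, i.e.\ that prescribing the $\ell$ entries of $P$ costs a factor $n^{-\ell}$ up to a multiplicative error $(1\pm\eps)^\ell$. Reveal the entries $e_1,\dots,e_\ell$ of $P$ one at a time, giving $N_k(P)=N_k\prod_{t=1}^\ell\rho_t$ with $\rho_t=\Prob{e_t\in\randLR\mid e_1,\dots,e_{t-1}\in\randLR}$; it suffices that $\rho_t=(1\pm\eps)/n$ for every $t$. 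The $\beta n$-sparsity of $P$ enters twice: after revealing $P_{t-1}:=\{e_1,\dots,e_{t-1}\}$ the row, column and symbol of $e_t=(i,j,s)$ each still have at least $(1-\beta)n$ free slots, and when $\ell$ is large a crude per-step error $o(1/n)$ accumulates only to $o(\ell/n)\ll\eps\ell$.

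To estimate a single $\rho_t$ I would use the switching method: with all other entries fixed, compare the completions in which cell $(i,j)$ holds $s$ with those in which it holds some other symbol $s'$, via chains of intercalate-type swaps that shuttle $s$ and $s'$ through the relevant row and through the cells these symbols currently occupy. When every row and column of the ambient structure has fewer than $n/2$ filled cells, such alternating structures are plentiful and essentially non-interacting; counting them carefully yields $\rho_t=(1\pm o_n(1))/n$, which is exactly the mechanism behind this paper's proof of the conjecture in the range $k<(1/2-\alpha)n$. For moderately dense $k$ one can try to extend this reach by completion duality: a $k\times n$ Latin rectangle is an $n\times n$ Latin square with its bottom $n-k$ rows deleted, and those rows form a sparse structure when $n-k$ is small, so --- after controlling the number of extensions, which for $k=n-O(1)$ is essentially a change of variables and for larger $n-k$ would require permanent or entropy estimates --- one can hope to reduce statements about dense rectangles to statements about near-full squares with a prescribed sparse hole, in which switchings become available once a bounded number of rows are peeled off.

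The main obstacle is the genuinely dense case, above all $k=n$, where $\randLR$ is a uniformly random order-$n$ Latin square: every row and column is completely filled, so no slack remains for the switchings above and the duality degenerates. Here the statement for even a single nine-entry $3\times3$ Latin-subsquare pattern is equivalent to Conjecture~\ref{conj:McKayWanless}\ref{3x3inLS}, which is open, so any proof must first establish precise local statistics for random Latin squares. I expect the only viable route is to enumerate order-$n$ Latin squares with a prescribed \emph{bounded} set of cells to within a $(1+o(1))$ factor --- by an absorption argument in the spirit of recent work on designs and on random Latin squares, or by the entropy/spread method --- and then to bootstrap from bounded to $\beta n$-sparse patterns using the switching propagation above. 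Sharpening such an enumeration from the $e^{o(n^2)}$-type error that current Latin-square counting results carry down to a $(1\pm\eps)^\ell$ error that exposes the $n^{-\ell}$ main term is the crux of the problem; indeed, by the equivalence just noted, a complete proof of the conjecture in the full range $k\le n$ would in particular resolve Conjecture~\ref{conj:McKayWanless}\ref{3x3inLS}.
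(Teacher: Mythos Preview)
The statement you were asked to prove is labeled \emph{Conjecture} in the paper, and the paper does not prove it. What the paper proves is the special case $k<(1/2-\alpha)n$ (Theorem~\ref{thm:rectanglepatternversion}); the full range $k\le n$ is left open. So there is no ``paper's own proof'' to compare against, and your write-up is not a proof but a research outline --- which you essentially concede in your final paragraph.

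On the substance of your outline: in the sparse regime $k<(1/2-\alpha)n$ your plan coincides with what the paper actually does. The paper also reveals the entries of $P$ one at a time (Lemma~\ref{lem:prob1elem}), reduces to a single conditional ratio, and estimates that ratio by a switching/double-counting argument (Lemma~\ref{lem:switching}); the auxiliary digraph built from the remaining free cells has minimum semidegree about $n-k$, which is where the $k<n/2$ barrier arises. Your diagnosis of why switchings fail beyond $k=n/2$ matches the paper's own discussion at the end of Section~\ref{sec:proofofgraphthm}.

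For the dense range your proposal contains no actual argument, only hopes (completion duality, entropy/spread, absorption) together with the observation that the case $k=n$ with a nine-cell pattern already implies Conjecture~\ref{conj:McKayWanless}\ref{3x3inLS}. That observation is correct and is precisely why the paper states this as a conjecture. In short: your sparse-range sketch is right and aligns with the paper; your dense-range sketch is not a proof, and neither the paper nor anyone else currently has one.
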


The $\beta n$-sparsity condition is essentially necessary in Conjecture~\ref{conj:patternversion}.  For example, if $P$ consists of a single nonempty row or a single nonempty column with $\ell$ nonempty entries, then the probability $\randLR$ contains $P$ is exactly $\ell!/n!$, and if $\ell! / n! \leq ((1 + o(1))/n)^\ell$, then $\ell = o(n)$.  A similar argument can be made for partial Latin rectangles with $\ell$ entries of a fixed symbol.  Our main result in this paper is to confirm Conjecture~\ref{conj:patternversion} for $k \leq (1/2 - o(1))n$, as follows.

\begin{thm}\label{thm:rectanglepatternversion}
For every $\eps, \alpha > 0$, there exists $\beta > 0$ such that the following holds for sufficiently large $n$ and $k < (1/2 - \alpha)n$. If $P$ is a $\beta n$-sparse $k \times n$ partial Latin rectangle with $\ell$ nonempty entries and $\randLR \in \setLR$ is chosen uniformly at random, then
\[
    \left(\frac{1 - \eps}{n}\right)^\ell \leq \Prob{P \subseteq \randLR} \leq \left(\frac{1 + \eps}{n}\right)^\ell.
\]
\end{thm}

From Theorem ~\ref{thm:rectanglepatternversion}, we derive the following bound on the expected number of Latin subsquares in Latin rectangles. For $L \in \mathcal{L}_n$ or $L \in \setLR$ and $r < n$, let $SS_{r}(L)$ denote the number of order-$r$ Latin subsquares of $L$ and let $SS_{r,m}(L)$ denote the number of order-$r$ subsquares that use at most $m$ symbols. Note that $SS_r$ is a random variable that counts the number of Latin subsquares and for every $L \in \mathcal{L}_n$ or $L \in \setLR$, $SS_{r}(L) = SS_{r, r}(L)$ by the definition of a Latin subsquare.
\begin{cor}\label{cor:SSexpectation}
    For every $\eps, \alpha > 0$, there exists $\beta > 0$ such that the following holds for sufficiently large $n$ and $k < (1/2 - \alpha)n$. If $\randLR \in \setLR$ is a uniformly random $k \times n$ Latin rectangle and $m \leq \beta n$, then
    $$\binom{n}{m}^2\binom{k}{m}|\mathcal{L}_{m}|\left(\frac{1 - \eps}{n}\right)^{m^2} \leq \Expect{SS_{m}(\randLR)} \leq \binom{n}{m}^2\binom{k}{m}|\mathcal{L}_{m}|\left(\frac{1 + \eps}{n}\right)^{m^2}.$$
\end{cor}
Theorem~\ref{thm:MWforLRs}\ref{3x3inLR} follows immediately from Corollary~\ref{cor:SSexpectation} since $|\mathcal{L}_{3}| = 12$.  We prove Theorem~\ref{thm:nolss} and Theorem~\ref{thm:MWforLRs}\ref{>4inLR} in Section~\ref{sect:nolss} using Theorem~\ref{thm:rectanglepatternversion}.  We prove Theorem~\ref{thm:rectanglepatternversion} in Section~\ref{sec:proofofgraphthm}.
We also remark that, using the same proof as of Theorem~\ref{thm:MWforLRs} using Theorem~\ref{thm:rectanglepatternversion}, Conjecture~\ref{conj:patternversion}, if true, implies Conjecture~\ref{conj:MWforLRs} (and thus also Conjecture~\ref{conj:McKayWanless}).

Conjecture~\ref{conj:patternversion} (in the case $k = n$) is also closely related to a conjecture of the second author that the uniform distribution on order-$n$ Latin squares is \textit{$((e^2 + o(1))/n)$-spread}, meaning for every partial Latin square $P$ with $\ell$ nonempty entries, the probability that a uniformly random order-$n$ Latin square $\randLS \in \cL_n$ contains $P$ is at most $((e^2 + o(1))/n)^\ell$.  This conjecture, if true, also implies Conjecture~\ref{conj:McKayWanless}\ref{>3inLS} as well as the recent solution to the threshold problem for Latin squares \cite{SSS22, KKKMO22, Ke22, JP22} via the Park--Pham Theorem \cite{PP22}.  The second author also conjectures that the uniform distribution on \textit{$(n,k,t)$-Steiner systems} is $\left.((1 + o(1))\exp\left.(\binom{k}{t}-1\right.)\middle./\binom{n - t}{k - t}\right.)$-spread.  The natural analogue of Conjecture~\ref{conj:patternversion} for Steiner systems would be that every partial $(n,k,t)$-Steiner system of maximum $(t-1)$-degree at most $\beta n$ with $\ell$ blocks is contained in a uniformly random $(n, k, t)$-Steiner system with probability between $((1 - \eps)/\binom{n-t}{k-t})^\ell$ and $((1 + \eps)/\binom{n-t}{k-t})^\ell$.


\section{Preliminaries}

\subsection{Notation}

For $k \in \mathbb N$, we let $[k] = \{1, \dots, k\}$.
For real numbers $a,b,c$ such that $b>0$, we write $a=(1\pm b)c$ to mean that the inequality $(1-b)c\leq a\leq (1+b)c$ holds.
We sometimes state a result with a hierarchy of constants which are chosen from right to left.
If we state that the result holds whenever $a \ll b_1,\dots,b_t$, then this means that there exists a function $f \colon (0,1)^t \to (0,1)$ such that $f(b_1 , \dots , \widetilde{b_i} , \dots , b_t) \leq f(b_1 , \dots , b_i , \dots , b_t)$ for $0 < \widetilde{b_i} \leq b_i < 1$ for all $i \in [t]$ and the result holds for all real numbers $0 < a , b_1 , \dots , b_t < 1$ with $a \leq f(b_1 , \dots , b_t)$. If a reciprocal $1/m$ appears in such a hierarchy, we implicitly assume that $m$ is a positive integer.
Hierarchies with more constants are defined similarly and should be read from the right to the left.

\subsection{Graph theory}\label{sect:graph-theory}

A \textit{matching} in a graph $G$ is a set of pairwise disjoint edges of $G$, and a \textit{perfect matching} of a graph $G$ is a matching in $G$ containing all of its vertices. We will sometimes treat a matching $M$ as if it is a graph in which every vertex has degree one, using $V(M)$ to refer to its vertex set.

For $k \leq n$, let $\cN_{k, n}$ be the set of $(N_1, \dots, N_k)$, where $N_1, \dots, N_k$ are pairwise edge-disjoint perfect matchings of $K_{n,n}$. 
There exists a one-to-one correspondence between $\cN_{k,n}$ and $\setLR$: given an $L \in \setLR$, we construct a tuple $(N_1, N_2, \dots, N_k) \in \cN_{k,n}$ as follows. Label the vertices in one partite set $A$ of $K_{n, n}$ as the symbols $s_1, \dots, s_n$ of $L$, and label the vertices in the other partite set $B$ of $K_{n, n}$ as the columns $c_1, \dots, c_n$ of $L$. For $i \in [k]$, consider the $i$-th row of $L$. In this row, let symbol $s_\ell$ appear in the $j$-th column. For each row $i$, create a perfect matching $N_i$ where $c_j$ is matched with $s_\ell$ if $s_\ell$ appears in the $j$-th column of the $i$-th row of $L$. For all $i \in [k]$, $N_i$ is indeed a perfect matching because every symbol appears exactly once in row $r_i$ and every column intersects $r_i$ exactly once, and hence every $s \in A$ and $c \in B$ is also in $N_i$ and has degree one. Furthermore, any two distinct perfect matchings $N_i$ and $N_j$ are edge-disjoint by the Latin property of $L$. The other direction of the correspondence follows from identical (but reversed) reasoning.

If $M_1, \dots, M_k$ are pairwise edge-disjoint matchings (not necessarily perfect) of $K_{n,n}$, then we say $(M_1, \dots, M_k)$ is \textit{$C$-sparse} if $\bigcup_{i=1}^kM_i$ has maximum degree at most $C$ and $|M_i| \leq C$ for all $i \in [k]$.  Note that in this case, $(M_1, \dots, M_k)$ 
corresponds to a partial $k \times n$ Latin rectangle that is $C$-sparse. 

\subsection{Robust expansion}

Let $\nu, \tau > 0$ and let $D$ be a directed graph on $n$ vertices. The $\nu$\textit{-robust outneighborhood} $RN_{\nu, D}^{+}(S)$ of a set $S \subseteq V(D)$ in $D$ consists of all the vertices of $D$ which have at least $\nu n$ in-neighbors in $S$. We say that $D$ is a \textit{robust} $(\nu, \tau)$\textit{-outexpander} if $RN_{\nu, D}^{+}(S) \geq |S| + \nu n$ for each $S \subseteq V(D)$ satisfying $\tau n \leq |S| \leq (1 - \tau)n$. A digraph $D$ on $n$ vertices is $(\delta, f)$\textit{-almost regular} if all vertices in $D$ have in-degree and out-degree $(1 \pm f)\delta n$. 

We will need the following lemma for the proof of Theorem~\ref{thm:rectanglepatternversion}.  Using the fact that random walks mix rapidly in nearly regular robust expanders, it provides a precise estimation for the number of long paths between two fixed vertices or cycles containing some fixed vertex.

\begin{restatable}{lemma}{granetjoos}\label{lem:granetjoos2.5}
    Let $0 < {1}/{n} \ll \eps \ll 1/\ell \ll \gamma \ll \nu \leq \tau \ll \delta \leq 1$ and $1 / n \leq f \leq \eps$. If $G$ is a $(\delta, f)$-almost regular robust $(\nu, \tau)$-outexpander on $n$ vertices, then for any distinct $u, v \in V(G)$, there exist $(1 \pm \gamma)\delta^\ell n^{\ell-1}$
    $(u, v)$-paths $P$ of length $\ell$ and for every $u \in V(G)$, there exist $(1 \pm \gamma)\delta^\ell n^{\ell - 1}$ length-$\ell$ cycles containing $u$.
\end{restatable}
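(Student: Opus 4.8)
\medskip
\noindent\textbf{Proof sketch for Lemma~\ref{lem:granetjoos2.5}.}
The plan is to count walks first and then pass to paths and cycles. Let $M$ be the transition matrix of the random walk on $G$ that moves from a vertex to a uniformly random out-neighbour, so $M_{xy} = 1/d^+(x)$ if $xy$ is an edge and $0$ otherwise. Since every out-degree of $G$ is $(1\pm f)\delta n$, any fixed walk $w_0,\dots,w_\ell$ is traced by the random walk started at $w_0$ with probability $\prod_{i<\ell}1/d^+(w_i) = (1\pm\gamma/4)(\delta n)^{-\ell}$, using $\ell f \le \ell\eps \ll \gamma$. Summing over all $(u,v)$-walks of length $\ell$ gives
\[
    \#\{(u,v)\text{-walks of length }\ell\} = (1\pm\gamma/3)\,(\delta n)^\ell\,(M^\ell)_{uv},
\]
and the same identity holds for closed walks of length $\ell$ through a fixed vertex with $(M^\ell)_{uu}$ in place of $(M^\ell)_{uv}$. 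Thus everything reduces to the estimate $(M^\ell)_{uv} = (1\pm\gamma/3)/n$ for all $u,v$, i.e.\ the random walk is essentially mixed in $\ell_\infty$ after $\ell$ steps.

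This mixing estimate is the heart of the argument and the only place robust outexpansion is used. One first checks, by almost-regularity, that after a single step any distribution $p$ satisfies $\max_x (pM)_x \le 2/(\delta n)$. One then establishes a contraction step: if a distribution $p$ with $\max_x p_x \le 2/(\delta n)$ is not yet within $\gamma/(3n)$ of uniform in $\ell_\infty$, then its heavy set $S = \{x : p_x > 1/n\}$ (or, symmetrically, its light set) is either medium-sized, in which case robust $(\nu,\tau)$-outexpansion forces at least $\nu n$ vertices outside $S$ each to receive at least $\nu n$ edges from $S$, so that mass is redistributed and the normalised extreme coordinates $n\max_x p_x$ and $n\min_x p_x$ move multiplicatively closer to $1$; or it is a very small or very large level set, where robust expansion is silent but almost-regularity alone forces a geometric contraction of those quantities at rate $O(\delta)$. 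Iterating, after $t_0 = t_0(\nu,\tau,\delta)$ steps the distribution lies within $\gamma/(3n)$ of uniform in $\ell_\infty$, and $t_0 \le \ell$ by the hierarchy $1/\ell \ll \gamma \ll \nu \le \tau \ll \delta$. This gives $(M^\ell)_{uv} = (1\pm\gamma/3)/n$, and hence $(1\pm\gamma/2)\,\delta^\ell n^{\ell-1}$ walks of length $\ell$ between, or through, any prescribed vertices.

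It remains to discard the walks of length $\ell$ that revisit a vertex. Writing $N(a,b,t)$ for the number of length-$t$ walks from $a$ to $b$, the number of length-$\ell$ $(u,v)$-walks with $w_i = w_j$ for a fixed pair $0 \le i < j \le \ell$ is $\sum_x N(u,x,i)\,N(x,x,j-i)\,N(x,v,\ell-j)$. Using the crude bound $N(a,b,t) \le \Delta^{t-1}$ for $t \ge 1$, where $\Delta = (1+f)\delta n$ (a closed walk of length $j-i$ through $x$ has only $j-i-1$ free vertices), together with $\sum_x N(u,x,i)\,N(x,v,\ell-j) = N(u,v,\,i+\ell-j) \le \Delta^{\,i+\ell-j-1}$, this is $O(\Delta^{\ell-2}) = O(\delta^{\ell-2}n^{\ell-2})$ whenever $(i,j) \ne (0,\ell)$ --- which is automatic for paths since $u \ne v$, and which we simply exclude for cycles. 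Summing over the $O(\ell^2)$ pairs $(i,j)$ bounds the number of bad walks by $O(\ell^2\delta^{\ell-2}n^{\ell-2})$, which is $o(\gamma\,\delta^\ell n^{\ell-1})$ by the hierarchy. Subtracting this from the walk count leaves $(1\pm\gamma)\,\delta^\ell n^{\ell-1}$ paths of length $\ell$ from $u$ to $v$, and, taking $u = v$ and removing closed walks with a repeated internal vertex by the same estimate, the same count of length-$\ell$ cycles through $u$.

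The main obstacle is the mixing estimate of the second paragraph: one must show that robust outexpansion, which by definition constrains only medium-sized sets, together with almost-regularity forces the random walk to converge to near-uniform in the $\ell_\infty$ norm within a number of steps depending only on $\nu,\tau,\delta$. By comparison, the walk-to-path conversion needs only elementary degree bounds, and the passage between walk counts and transition probabilities is routine once almost-regularity is in hand.
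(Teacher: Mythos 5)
Your overall strategy matches the paper's: count walks of length $\ell$ by analysing the random walk on the digraph, then subtract the $O(\ell^2\Delta^{\ell-2})$ walks with a repeated vertex to pass to paths (and, for $u=v$, cycles). The walk-to-path conversion in your third paragraph is carried out correctly and in enough detail, and the reduction to the estimate $(M^\ell)_{uv}=(1\pm\gamma/3)/n$ is the right reduction.

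The gap, which you yourself flag, is that the mixing estimate is only sketched, and the sketch has a subtlety that the paper's proof is specifically designed to avoid. Because $G$ is merely $(\delta,f)$-almost regular, the random walk on $G$ does not have uniform stationary distribution, and almost-regularity alone does not force $\pi_v=(1\pm o(\gamma))/n$: the fixed-point relation $\pi_y=\sum_{x\in N^-(y)}\pi_x/d^+(x)$ only yields bounds of the form $\min\pi\geq \min\pi\cdot(1-f)/(1+f)$, which is vacuous, so the deviation of $\pi$ from uniform is not immediately controlled by $f$. The paper sidesteps this entirely by first invoking the K\"uhn--Osthus regularization lemma (Lemma~\ref{lem:gjlem2.1}) to extract a spanning \emph{exactly} $(1-f^{1/2})\delta n$-regular subdigraph $D$ that is still a robust $(\nu/2,\tau)$-outexpander; on $D$ the stationary distribution is exactly uniform, and the mixing is then read off from the black-box lemmas of Joos--K\"uhn (Lemma~\ref{lem:jklem3.2}) and Granet--Joos (Lemma~\ref{lem:gjlem2.4}), which give the needed two-sided bound on $P^k(i,j)/\sigma_\ell$ in a regular robust outexpander. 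The contribution of walks using an edge of $G\setminus D$ is then bounded separately by the degree defect $d^+_G(x)-d^+_D(x)\leq 2\delta n f^{1/2}$. Your proposed contraction argument, which tries to get mixing in $\ell_\infty$ directly from robust outexpansion together with almost-regularity, would also need to verify (i) that the contraction rate in the ``medium-sized level set'' regime does not depend on $\gamma$, (ii) that the small/large level-set regime really contracts at rate $O(\tau/\delta)$ independently of where the level set sits relative to $\tau n$, and (iii) that the error from non-uniformity of $\pi$ accumulated over $t_0(\nu,\tau,\delta)$ steps stays below $\gamma/(3n)$. None of these is unreasonable, but together they constitute the substance of the Joos--K\"uhn and Granet--Joos lemmas, so as written the heart of the lemma remains unproved. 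If you want to avoid citing those results, the cleanest fix is still to regularize first via Lemma~\ref{lem:gjlem2.1} so that $\pi$ is exactly uniform before running your contraction argument.
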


Lemma~\ref{lem:granetjoos2.5} is very similar to a recent result of Granet and Joos \cite[Lemma 2.5]{GJ23} and has a similar proof. We thus include a proof of Lemma~\ref{lem:granetjoos2.5} in Appendix~\ref{sec:appendix}.


\section{Proof of Theorem~\ref{thm:rectanglepatternversion}}\label{sec:proofofgraphthm}

In this section, we prove Theorem~\ref{thm:rectanglepatternversion}.  First, we deduce Theorem~\ref{thm:rectanglepatternversion} from the following lemma.

\begin{lemma}\label{lem:prob1elem}
    Let $0 < 1 / n \ll \beta \ll \eps, \alpha < 1$, and let $k < (1/2 - \alpha)n$.  Let $P$ and $P'$ be partial $k \times n$ Latin rectangles such that $P \subseteq P'$ and $P'$ has one more nonempty entry than $P$.  If $P'$ is $\beta n$-sparse and $\randLR \in \setLR$ is chosen uniformly at random, then
    \begin{equation*}
        \ProbCond{P' \subset \randLR}{P \subset \randLR} = \frac{1 \pm \eps}{n}.
    \end{equation*}
\end{lemma}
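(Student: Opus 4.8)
The plan is to first deduce Theorem~\ref{thm:rectanglepatternversion} from Lemma~\ref{lem:prob1elem} by telescoping, and then to prove Lemma~\ref{lem:prob1elem} by a switching argument phrased through the matching correspondence of Section~\ref{sect:graph-theory}. For the deduction: with $\beta$ as provided by Lemma~\ref{lem:prob1elem} for the parameters $\eps,\alpha$, and any chain $\emptyset = P_0 \subset P_1 \subset \cdots \subset P_\ell = P$ in which $P_i$ has exactly one more nonempty entry than $P_{i-1}$, each $P_i \subseteq P$ is $\beta n$-sparse, so
\[
    \Prob{P \subseteq \randLR} = \prod_{i=1}^\ell \ProbCond{P_i \subseteq \randLR}{P_{i-1} \subseteq \randLR} = \prod_{i=1}^\ell \frac{1 \pm \eps}{n},
\]
which lies between $(\tfrac{1-\eps}{n})^\ell$ and $(\tfrac{1+\eps}{n})^\ell$. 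So it remains to prove Lemma~\ref{lem:prob1elem}.

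Write the unique entry of $P'$ not in $P$ as a triple $(r,c,x)$, and for a partial rectangle $Q$ let $N(Q)$ be the number of $k\times n$ Latin rectangles containing $Q$, so $\ProbCond{P' \subseteq \randLR}{P \subseteq \randLR} = N(P')/N(P)$. For a symbol $y$ occurring in neither row $r$ nor column $c$ of $P$ (an \emph{admissible} $y$, of which there are $(1 \pm O(\beta))n$ by $\beta n$-sparsity of $P$), set $N_y := \#\{L \in \setLR : P \subseteq L,\ L_{r,c}=y\}$; then $N(P') = N_x$ and $N(P) = \sum_y N_y$, and each $N_y>0$ since every $\beta n$-sparse partial $k\times n$ Latin rectangle extends to a Latin rectangle (an elementary greedy argument when $k < (1/2-\alpha)n$, as then each successive row is completed via a perfect matching in a bipartite graph with parts of size $\ge n-\beta n$ and minimum degree $> (n-\beta n)/2$). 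Hence it suffices to prove $N_y = (1 \pm \gamma)N_{y'}$ for all admissible $y,y'$, with a small constant $\gamma$ chosen so that $O(\gamma + \beta) \le \eps$: this gives $N_x = (1 \pm O(\gamma + \beta))N(P)/n$, as needed. To compare $N_y$ and $N_{y'}$ I would double count the pairs $(L,T)$ where $L \in \setLR$, $P \subseteq L$, $L_{r,c}=y$, and $T$ is a Latin trade inside $L$ disjoint from $P$, of a prescribed length $s$ (a large constant, fixed below), whose effect is to change the symbol at $(r,c)$ from $y$ to $y'$. Applying $T$ produces an $L' \in \setLR$ with $P \subseteq L'$, $L'_{r,c}=y'$, in which $T$ is a length-$s$ trade changing $(r,c)$ from $y'$ back to $y$; so these pairs biject with the analogous pairs based at $y'$. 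Thus $\sum_{L} W_s(L; y \to y') = \sum_{L'} W_s(L'; y' \to y)$ with $W_s$ the number of such trades, and the proof reduces to showing
\[
    W_s(L; y \to y') = (1 \pm O(\gamma+\beta))\,W
\]
for \emph{every} admissible $L$ (and the mirror statement for $L'$), where $W$ depends only on $n$, $k$, $s$; the two sums then force $N_y = (1 \pm O(\gamma))N_{y'}$.

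This last estimate is where Lemma~\ref{lem:granetjoos2.5} is used. A trade of the relevant bounded shape unwinds to a closed alternating walk in $L$; recording only the free choices made along it (which currently-available symbol or column to route through at the next step) identifies these trades with the directed paths of length $s$ between two fixed distinct vertices of an auxiliary digraph $D = D(L,y,y')$ built from $L$ and the positions forbidden by $P$ (depending on the precise shape, one instead counts length-$s$ cycles through a fixed vertex — both are covered by Lemma~\ref{lem:granetjoos2.5}). The one place the hypothesis $k < (1/2-\alpha)n$ is essential is the degree count for $D$: the number of valid continuations at each step is, up to an additive $O(\beta n)$ from avoiding $P$, the number of symbols (or columns) not yet used in the relevant line, which is of order $n-k > (\tfrac12+\alpha)n$. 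Hence $D$ is $(\delta, O(\beta))$-almost regular with $\delta n = (1 \pm O(\beta))(n-k)$, and since $\delta > \tfrac12$ a standard argument shows $D$ is a robust $(\nu,\tau)$-outexpander for appropriate $\nu \ll \tau \ll \alpha$. Lemma~\ref{lem:granetjoos2.5} then yields $(1 \pm \gamma)\delta^{s} n^{s-1}$ directed paths of length $s$ between any two distinct vertices of $D$, uniformly in $L, y, y'$ (the endpoints vary with these data but the count in Lemma~\ref{lem:granetjoos2.5} does not), which is the displayed bound. Crucially this holds for \emph{every} admissible $L$, not merely almost every, since the expansion and near-regularity of $D$ are forced by the degree bounds, which depend only on $k < (1/2-\alpha)n$ and the sparsity of $P$.

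The hard part, I expect, is making the trade/digraph correspondence precise and robust: choosing a bounded family of Latin trades that (i) can always be routed around the $\beta n$-sparse obstruction $P$ — here each of the $O(1)$ steps meets only an $O(\beta n)$-fraction of forbidden positions, so there is ample room — (ii) is counted cleanly by Lemma~\ref{lem:granetjoos2.5} via a digraph of the above kind, and (iii) is handled correctly at ``degenerate'' configurations where the alternating walk would otherwise close up before length $s$; one typically sidesteps these by working only with trades of the full fixed length $s$ and checking their number is still $(1\pm\gamma)\delta^{s}n^{s-1}$. Once the setup is fixed, verifying almost-regularity and robust expansion of $D$ for all admissible $L$ and carrying the $(1 \pm O(\gamma + \beta))$ errors through the double count back to $\ProbCond{P'\subseteq\randLR}{P\subseteq\randLR}$ should be routine. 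One further boundary point: if the natural auxiliary digraph ends up on $\Theta(k)$ rather than $\Theta(n)$ vertices, the case of bounded $k$ must be treated separately, but there the bipartite graphs governing consecutive rows are $(n - O(1))$-regular and the required approximate independence is elementary.
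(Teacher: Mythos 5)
Your telescoping reduction of Theorem~\ref{thm:rectanglepatternversion} to Lemma~\ref{lem:prob1elem} is exactly the paper's, and your overall plan for Lemma~\ref{lem:prob1elem} — a switching argument whose switch counts are controlled uniformly in $L$ via Lemma~\ref{lem:granetjoos2.5} applied to an auxiliary out-degree-$\approx (n-k)$ digraph that is a robust outexpander — is the same toolbox the paper uses. However, the specific comparison you set up has a genuine gap.

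You propose to prove $N_y = (1\pm\gamma)N_{y'}$ directly, by double-counting pairs $(L,T)$ where $T$ is a length-$s$ trade turning $L_{r,c}=y$ into $y'$, and the crux is your claim that $W_s(L;y\to y') = (1\pm O(\gamma+\beta))W$ for \emph{every} $L$ containing $P$ with $L_{r,c}=y$. But the trades consistent with your description (an alternating walk whose steps choose among the $\approx n-k$ symbols or columns not yet used in the relevant line, so that $D(L,y,y')$ has degrees $\approx n-k$) are precisely the single-row trades: flip the row-$r$ matching $N_r$ along an alternating cycle in $N_r$ versus free edges. For such a cycle to send $L_{r,c}$ from $y$ to $y'$, both edges of the cycle at the column-vertex $c$ are forced — one is $(y,c)\in N_r$ and the other must be $(y',c)$, so $(y',c)$ must be a free edge of $L$, i.e.\ $y'$ must not appear anywhere in column $c$ of $L$. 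This is an extra constraint not implied by ``$P\subseteq L$, $L_{r,c}=y$, $y'$ admissible.'' For roughly a $k/n \approx 1/2 - \alpha$ fraction of the $L$ counted by $N_y$, the symbol $y'$ does already occur in column $c$ (at some other row), and then $W_s(L;y\to y')=0$ — there is no admissible auxiliary digraph and no path count. So $W_s(L;y\to y')$ is not approximately constant over $L$, and the double count only yields the weaker relation $\#\{L: L_{r,c}=y,\ (y',c)\ \text{free}\} \approx \#\{L': L'_{r,c}=y',\ (y,c)\ \text{free}\}$, which does not directly give $N_y\approx N_{y'}$. (Using trades that also repair column $c$ by touching a second row forces a genuine multi-row Latin trade whose length is dictated by the row-cycle structure rather than a free parameter $s$, and the clean digraph picture breaks.)

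The paper sidesteps exactly this issue by changing what is being compared. Working in the matching model, it fixes the edge $e=(s,c)$ (symbol and column), sets $A_j=\{N\in\cN: e\in N_j\}$ and $B=\{N\in\cN: e\notin\bigcup_i N_i\}$, and compares each $A_j$ to $B$ rather than $A_j$ to $A_{j'}$. The switch is an alternating $(N_j\ /\ \text{free})$ cycle through $e$: its only job is to remove $e$ from $N_j$, with no requirement that any particular other edge enter $N_j$, so the switch exists unconditionally (subject to the sparsity conditions) and $d_H(N)$ is pinned down by Lemma~\ref{lem:granetjoos2.5} for \emph{every} $N\in A_j$ and every $N\in B$. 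This gives $(n-k)|A_j| = (1\pm\eps)|B|$, and then $|\cN| = |B| + \sum_j |A_j|$ finishes the computation. Crucially this decomposition uses $k<n$ in an essential way: the ``slack'' class $B$ exists because there are $n-k$ rows not yet present. If you wish to salvage your $N_y$-versus-$N_{y'}$ framing, you would need to compare each $N_y$ to the slack class $\#\{L\supseteq P: y\ \text{not in column}\ c\ \text{of}\ L\}$ — which, after translating symbols to rows and back, is precisely the paper's $A_j$-versus-$B$ comparison.
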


\begin{proof}[Proof of Theorem~\ref{thm:rectanglepatternversion} assuming Lemma~\ref{lem:prob1elem}]
    Let $P$ be a $\beta n$-sparse $k \times n$ partial Latin rectangle with $\ell$ nonempty entries.  Let $P_1, \dots, P_\ell$ be $k \times n$ partial Latin rectangles satisfying $P_1 \subset \cdots \subset P_\ell = P$, where $P_i$ has $i$ nonempty entries for every $i \in [\ell]$.  By the chain rule of probability,
    \begin{equation*}
        \Prob{P \subseteq \randLR} = \Prob{\bigcap_{i=1}^\ell P_i \subseteq \randLR} = \prod_{i = 1}^{\ell} \ProbCond{P_i \subseteq \randLR}{P_{i-1} \subseteq \randLR}.
    \end{equation*}
    Since $P$ is $\beta n$-sparse, $P_i$ is $\beta n$-sparse for all $i \in [\ell]$, so by Lemma~\ref{lem:prob1elem}, 
    \begin{equation*}
        \ProbCond{P_i \subseteq \randLR}{P_{i-1} \subseteq \randLR} = \frac{1 \pm \eps}{n}
    \end{equation*}
    for every $i \in [\ell]$.  Combining the two equations above, we have $\Prob{P \subseteq \randLR} = ((1\pm\eps)/n)^\ell$, as desired.
\end{proof}

The rest of the section will be dedicated to proving Lemma~\ref{lem:prob1elem}. 
It is easy to see that Lemma~\ref{lem:prob1elem} holds when $P'$ has one nonempty entry, and in fact, in this case $\ProbCond{P' \subset \randLR}{P \subset \randLR} = \Prob{P' \subset \randLR} = 1 / n$.  That is, given a row $r$, column $c$, and symbol $s$, there is precisely a $1 / n$ chance that $s$ is used in row $r$ and column $c$ in a uniformly random Latin rectangle.  Lemma~\ref{lem:prob1elem} states that this fact also holds when $\randLR$ is chosen from the conditional distribution, up to a minor error term, when we condition on $\randLR$ containing some $\beta n$-sparse partial Latin rectangle.  

Instead of working with Latin rectangles in the proof of Lemma~\ref{lem:prob1elem}, we will work with graphs. Due to the correspondence between Latin rectangles and edge-disjoint matchings in $K_{n,n}$ discussed in Section~\ref{sect:graph-theory}, the following lemma is equivalent to Lemma~\ref{lem:prob1elem}.

\begin{lemma}\label{lem:prob1elem-graphtheory}
    Let $0 < 1 / n \ll \beta \ll \eps, \alpha < 1$, and let $k < (1/2 - \alpha)n$.  Let $M_1, \dots, M_k$ be pairwise edge-disjoint matchings of $K_{n,n}$, and let $M'_1, \dots, M'_k$ be pairwise edge-disjoint matchings of $K_{n,n}$ such that $M_i \subseteq M'_i$ for all $i \in [k]$ and $\bigcup_{i=1}^k M'_i$ has one more edge than $\bigcup_{i=1}^k M_i$.  If $(M'_1, \dots, M'_k)$ is $\beta n$-sparse and $(\randPM_1, \dots, \randPM_k) \in \cN_{k,n}$ is chosen uniformly at random, then
     \begin{equation*}
        \ProbCond{M'_i \subseteq \randPM_i~\forall i\in[k]}{M_i \subset \randPM_i~\forall i\in[k]} = \frac{1 \pm \eps}{n}.
     \end{equation*}
\end{lemma}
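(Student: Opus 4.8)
The plan is to exploit the symmetry of the conditional distribution on $\cN_{k,n}$ together with a switching/counting argument powered by the long-path estimates of Lemma~\ref{lem:granetjoos2.5}. Write $e = xy$ for the single edge in $\bigcup M_i' \setminus \bigcup M_i$, and suppose $e \in M_j'$. Condition on the event $\mathcal{E}$ that $M_i \subseteq \randPM_i$ for all $i \in [k]$; we must show that, conditionally on $\mathcal{E}$, the probability that $e$ lands in $\randPM_j$ is $(1\pm\eps)/n$. The natural approach is to count: among all tuples $(\randPM_1,\dots,\randPM_k)\in\cN_{k,n}$ extending $(M_1,\dots,M_k)$, what fraction have $xy \in \randPM_j$? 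Equivalently, fixing the partial structure, we are choosing the remaining edges of $\randPM_j$ (a perfect matching of $K_{n,n}$ extending $M_j$ on the uncovered vertices) and of the other $\randPM_i$'s, all pairwise edge-disjoint and disjoint from the forbidden edges. First I would set up a bijection-with-weights between tuples containing $xy$ in coordinate $j$ and tuples not containing it, by performing an alternating-cycle switch along $\randPM_j$ and one other matching.

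The key step: given a tuple with $xy\notin\randPM_j$, the edge of $\randPM_j$ at $x$ is $xy'$ and at $y$ is $x'y$ for some $x'\neq x$, $y'\neq y$. To insert $xy$ we must remove $xy'$ and $x'y$ and reinsert $x'y'$ — but $x'y'$ may already be used by some other $\randPM_i$, or may be forbidden. The resolution is to route a longer alternating structure: build an auxiliary digraph on the columns (or symbols) whose arcs encode "where can the displaced edge go in $\randPM_j$ versus the other matchings," show this digraph is almost-regular and a robust outexpander (using $k < (1/2-\alpha)n$, which guarantees enough free edges at every vertex, so each such digraph has density bounded away from $0$ and from $1$ — this is exactly where the $(1/2-\alpha)$ threshold is used, analogously to McKay–Wanless), and then apply Lemma~\ref{lem:granetjoos2.5} to count the length-$\ell$ alternating paths/cycles realizing the switch in \emph{both} directions. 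The sparsity hypothesis $\beta n$-sparse ensures the conditioning removes only $O(\beta n)$ edges at any vertex and at most $\beta n$ edges total per matching, so the expander parameters degrade by only $O(\beta)$ and the counts in Lemma~\ref{lem:granetjoos2.5} are unaffected to within a $(1\pm\eps)$ factor. Comparing the two counts — tuples with $xy$ in coordinate $j$ versus tuples with some other fixed edge there, or versus the unconditioned total — yields the ratio $(1\pm\eps)/n$.

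Concretely, the steps in order: (i) reduce to a counting statement over $\cN_{k,n}$ conditioned on containing $(M_1,\dots,M_k)$; (ii) define, for the extension problem, the relevant auxiliary digraph(s) on $n$ vertices encoding admissible reroutings of $\randPM_j$ (and the companion matching used in the switch); (iii) verify almost-regularity and robust $(\nu,\tau)$-outexpansion of these digraphs, using $k<(1/2-\alpha)n$ for the degree lower bound and $\beta n$-sparsity to absorb the deleted edges; (iv) apply Lemma~\ref{lem:granetjoos2.5} to count length-$\ell$ alternating cycles/paths, choosing $1/\ell$ between $\gamma$ and $\eps$ as in the hierarchy; (v) assemble the switching bijection and divide the counts, tracking that all multiplicative errors compound to at most $(1\pm\eps)$.

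The main obstacle I anticipate is step (iii): showing that the auxiliary digraph remains a robust outexpander after conditioning. The unconditioned structure (a uniformly random system of $k$ edge-disjoint perfect matchings, or the "remaining" bipartite graph $K_{n,n}$ minus $k$ matchings) is easily seen to be a robust expander when $k$ is bounded away from $n/2$ on each side, but the conditioning on containing $(M_1,\dots,M_k)$ is a genuine conditioning, not a deletion of a fixed graph — we are selecting uniformly among completions. Handling this likely requires either showing that a \emph{typical} completion, or rather the \emph{union over all} completions weighted appropriately, still looks like an expander, or reformulating so that the expander in question is the deterministic graph $K_{n,n}\setminus\bigcup M_i'$ and the randomness enters only through which matching within it we are rerouting. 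The $(1/2-\alpha)$ hypothesis is precisely what makes $K_{n,n}\setminus\bigcup M_i'$ have minimum degree $\geq (1/2+\alpha-o(1))n$ at every vertex, hence be a robust outexpander with good parameters, so I expect the cleanest route is to phrase the switching entirely inside that fixed host graph and invoke Lemma~\ref{lem:granetjoos2.5} there.
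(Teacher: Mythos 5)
Your overall strategy — a switching argument combined with Lemma~\ref{lem:granetjoos2.5}, using $k<(1/2-\alpha)n$ to get robust outexpansion of an auxiliary digraph — is exactly the approach the paper takes. You also correctly identify the crux: on what object does the robust expander live? But your proposed resolution is the wrong one, and it would not work. You suggest phrasing the switching inside the fixed deterministic graph $K_{n,n}\setminus\bigcup_i M'_i$. That graph has degrees close to $n$, not close to $n-k$, and more importantly it does not encode which edges are \emph{actually free} in a given completion: an edge absent from $\bigcup_i M'_i$ may still be used by some completion $N_i\supseteq M_i$, so a ``switch'' traced inside $K_{n,n}\setminus\bigcup_i M'_i$ need not produce a valid tuple of pairwise edge-disjoint perfect matchings. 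The paper's resolution is to define a \emph{separate} digraph $D_N$ for each completion $N=(N_1,\dots,N_k)$ in $\cN$: its vertex set is $S\setminus V(M_j)$ and it has an arc $u\to v$ whenever there is a column $x$ with $ux\in K_{n,n}\setminus\bigcup_i N_i$ and $xv\in N_j$. The concern you raise about conditioning then evaporates, because every $N\in\cN$ has $K_{n,n}\setminus\bigcup_i N_i$ exactly $(n-k)$-regular, so every $D_N$ is $(1/2+\alpha,2\beta)$-almost regular deterministically, and K\"uhn--Osthus gives robust outexpansion for each one. No averaging over completions is needed.

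Two further gaps in the bookkeeping. First, the double count is organized differently than your ``compare $A_j$ to the unconditioned total'' suggestion: the paper first proves an intermediate switching lemma showing $(n-k)|A_j|=(1\pm\eps)|B|$ where $A_j=\{N\in\cN: e\in N_j\}$ and $B=\cN\setminus\bigcup_i A_i$, by double-counting edges of the auxiliary bipartite graph on $A_j\cup B$ whose edges are length-$2\ell$ alternating-cycle switches. (Degrees on the $A_j$ side are cycle counts through $u$ in $D_N$; degrees on the $B$ side are path counts — the two sides are asymmetric, so you apply Lemma~\ref{lem:granetjoos2.5} with lengths $\ell$ and $\ell-1$.) Only afterwards does one assemble $|\cN|=|B|+\sum_i|A_i|$, using $\beta n$-sparsity to show that at most $2\beta n$ indices $i$ have $e$ meeting $M_i$ (and $A_i=\emptyset$ for those), to extract $|A_\ell|/|\cN|=(1\pm\eps)/n$. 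Second, you need $|\cN|>0$ before any of this is well-defined; the paper invokes the completability result of Bowditch and Dukes for $\beta n$-sparse partial Latin rectangles, which your sketch omits.
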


The main ingredient in the proof of Lemma~\ref{lem:prob1elem-graphtheory} is the following lemma, which uses Lemma~\ref{lem:granetjoos2.5} and a ``switching'' argument.
Roughly, this lemma states that in a uniformly random $k \times n$ Latin rectangle for $k < (1/2 - \alpha)n$, given a row $r$, column $c$ and symbol $s$, the probability the symbol $s$ does not appear in column $c$ is close to $n - k$ times the probability $s$ is used in row $r$ and column $c$.  Moreover, this same result holds for the conditional distribution if we condition on the random Latin rectangle containing some $\beta n$-sparse partial Latin rectangle. 

\begin{lemma}\label{lem:switching}
    Let $0 < 1/n \ll \beta \ll \eps, \alpha < 1$, and let $k < (1/2 - \alpha)n$.  
    Let $M_1, \dots, M_k$ be pairwise edge-disjoint matchings of $K_{n,n}$, and let $\cN \subseteq \cN_{k,n}$ be the set of $(N_1, \dots, N_k)\in\cN_{k,n}$ satisfying $M_i \subseteq N_i$ for all $i \in [k]$.
    Let $e$ be an edge of $K_{n,n}$ not in $\bigcup_{i=1}^k M_i$, and for each $j \in [k]$, let $A_j \subseteq \cN$ be the set of $(N_1, \dots, N_k) \in \cN$ such that $e \in N_j$.
    For every $j \in [k]$, if $|M_j| \leq \beta n$ and $e$ is disjoint from the edges of $M_j$, then
    \begin{equation*}
        (n - k)|A_j| = (1 \pm \eps)|B|,
    \end{equation*}
    where $B = \cN \setminus \bigcup_{i=1}^k A_i$.  
\end{lemma}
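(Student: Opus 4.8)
The plan is to prove Lemma~\ref{lem:switching} via a ``switching'' argument that sets up a nearly regular bipartite auxiliary (multi)graph between $B$ and $\bigcup_{i=1}^k A_i$, or more precisely between $B$ and $A_j$, and then double-counts edges. Fix $j \in [k]$ and write $e = xy$ with $x \in A$ (the ``symbols'' side) and $y \in B$ (the ``columns'' side) of $K_{n,n}$. An element $(N_1,\dots,N_k) \in \cN$ lies in $A_j$ iff $e \in N_j$, and lies in $B$ iff $e \notin N_i$ for every $i$. The key operation is: given a configuration in $B$, find a row $i$ not using $e$ and perform a small modification to force $e$ into $N_i$ while preserving edge-disjointness, membership of all the $M_i$'s, and the perfect-matching property; conversely, given a configuration in $A_j$, remove $e$ from $N_j$ via the reverse modification. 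The natural switch is an \emph{alternating path/cycle switch}: in the matching $N_i$ (for the forward direction, $N_i$ is the matching we want to insert $e$ into), $x$ is currently matched to some $y'$ and $y$ is matched to some $x'$; we want to replace edges $xy', x'y$ by $xy, x'y'$, but $x'y'$ may already be present in some other $N_{i'}$, so a longer alternating structure is needed. This is exactly where Lemma~\ref{lem:granetjoos2.5} enters: we encode the ``conflict graph'' of available swaps as a robust outexpander and count the long alternating paths/cycles realizing a valid switch.

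**Building the auxiliary digraph and applying Lemma~\ref{lem:granetjoos2.5}.**

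Concretely, I would fix a configuration $N = (N_1,\dots,N_k) \in B$ and build a digraph $D = D(N)$ on vertex set $[n]$ (the columns, say) as follows: for each pair of columns, put in an arc encoding ``there is a row $i$ and a way to locally re-route so that following this arc corresponds to extending an alternating walk that will eventually allow $e$ to be absorbed.'' Because $k < (1/2-\alpha)n$, every column and every symbol is used in at most $k$ matchings and hence is ``free'' in at least $(1/2+\alpha)n - \beta n \geq (1/2 + \alpha/2)n$ of the potential slots; this abundance of free slots is what makes $D$ $(\delta,f)$-almost regular with $\delta$ bounded below by an absolute constant (roughly $\delta \approx \alpha/2$ or so) and a robust $(\nu,\tau)$-outexpander — in fact a dense almost-regular digraph on $n$ vertices with all degrees $\geq (1/2+\alpha/2 - o(1))n$ is automatically a robust outexpander with $\nu, \tau$ depending only on $\alpha$. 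Then Lemma~\ref{lem:granetjoos2.5}, applied with $\ell$ chosen so that $\beta \ll 1/\ell \ll \alpha$, tells us that the number of length-$\ell$ alternating paths/cycles implementing a switch from $N$ into $A_j$ is $(1\pm\gamma)\delta^\ell n^{\ell-1}$ — crucially, this count is essentially \emph{independent of $N$}, up to the $(1\pm\gamma)$ factor. The same analysis run from a configuration $N' \in A_j$ counts the reverse switches and again yields $(1\pm\gamma)\delta^\ell n^{\ell-1}$ up to matching the normalization; the factor of $(n-k)$ arises because, to pass from $B$ to $A_j$, one first selects which of the $n - |{\text{symbols forbidden at the relevant slot}}| \approx n-k$ eligible values to use before routing the alternating walk, whereas the reverse direction the ``choice of value'' is already pinned down by $e$.

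**Double counting and extracting the constant.**

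Having set up the bipartite switching graph $H$ with parts $B$ and $A_j$ and edges given by valid length-$\ell$ switches, I would count $e(H)$ two ways. Every $N \in B$ has $(1\pm\gamma)(n-k)\delta^\ell n^{\ell-1}$ switches out of it (the factor $n-k$ from the choice of inserted value, the rest from Lemma~\ref{lem:granetjoos2.5}), giving $e(H) = (1\pm\gamma)(n-k)\delta^\ell n^{\ell-1}|B|$. Every $N' \in A_j$ has $(1\pm\gamma)\delta^\ell n^{\ell-1}$ switches into it, giving $e(H) = (1\pm\gamma)\delta^\ell n^{\ell-1}|A_j|$. Equating and absorbing the $\gamma$'s into $\eps$ (legitimate because $\gamma$ can be taken as small as we like relative to $\eps$ in the hierarchy) yields $(n-k)|A_j| = (1\pm\eps)|B|$, as desired.

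**The main obstacle.**

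The hard part will be setting up the switching operation so that (i) a single length-$\ell$ alternating walk in $D(N)$ genuinely corresponds to a \emph{bijectively reversible} modification that lands in $A_j$ while preserving all of $M_1 \subseteq N_1, \dots, M_k \subseteq N_k$ and the edge-disjoint perfect-matching structure — this requires carefully routing the alternating walk so that it avoids all edges of $\bigcup_i M_i$ (which is possible since that set has only $O(\beta n)$ edges, negligible against the $\Omega(n)$ degrees) and avoids creating a repeated edge between two matchings — and (ii) verifying that the digraph $D(N)$ is almost regular and a robust outexpander \emph{uniformly over all $N$}, with parameters depending only on $\alpha$. The second point is delicate because the ``forbidden'' slots depend on $N$; the resolution is that the condition $k < (1/2-\alpha)n$ forces a large min-degree for \emph{every} $N$, and a large-min-degree almost-regular digraph is robustly expanding, so the expansion parameters can be chosen before $N$. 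A secondary technical nuisance is handling the ``boundary'' cases of the alternating walk — when the walk closes up into a cycle versus terminating at a designated endpoint — which is exactly why Lemma~\ref{lem:granetjoos2.5} is stated for both paths between two fixed vertices and cycles through a fixed vertex; I would split into these cases and check the counts agree up to $(1\pm\gamma)$.
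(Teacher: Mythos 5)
Your high-level strategy matches the paper's: build an auxiliary bipartite ``switching'' graph $H$ between $A_j$ and $B$, encode valid switches as long alternating cycles/paths in an auxiliary digraph built from each configuration, show that digraph is almost-regular and robustly expanding because $k < (1/2-\alpha)n$, invoke Lemma~\ref{lem:granetjoos2.5} to count switches, and double-count $e(H)$. The issue is in how you extract the factor $n-k$, and this is not a minor bookkeeping slip --- it is the crux of the lemma.

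You claim the factor $n-k$ ``arises because, to pass from $B$ to $A_j$, one first selects which of the $\approx n-k$ eligible values to use before routing the alternating walk.'' But $e$ is a \emph{fixed} edge of $K_{n,n}$; an element of $B$ is switched into $A_j$ precisely by inserting that one fixed edge into $N_j$, so there is no choice of value to be made. Consequently your degree counts come out backwards: you assign $\approx (n-k)\delta^\ell n^{\ell-1}$ switches to each $N\in B$ and $\approx \delta^\ell n^{\ell-1}$ to each $N'\in A_j$, which after double counting would give $(n-k)|B| = (1\pm\eps)|A_j|$ --- the reciprocal of what the lemma asserts. The correct source of the $(n-k)$ factor is an asymmetry in the projected combinatorial object, not in a choice of inserted value: when $N\in A_j$ (so $e\in N_j$), a length-$2\ell$ alternating switch through $e$ projects to a length-$\ell$ \emph{cycle} through a fixed vertex $u$ in the auxiliary digraph $D_N$, of which there are $\approx \delta^\ell n^{\ell-1}$; when $N\in B$ (so $e\notin \bigcup_i N_i$), the first step of the alternating walk is forced to be $e$, and the switch projects to a length-$(\ell-1)$ \emph{path} between two specified vertices, of which there are only $\approx \delta^{\ell-1} n^{\ell-2}$. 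The ratio $\delta n \approx (1/2+\alpha)n \approx n-k$ is exactly the factor in the lemma. Your proposal explicitly says you would ``check the counts agree up to $(1\pm\gamma)$'' between the path and cycle cases, but they must \emph{not} agree --- their ratio is the whole point. Without this you cannot produce $(n-k)$ on the correct side, and the argument fails.
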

\begin{proof}     
    We assume $k = \lfloor (1/2 - \alpha)n\rfloor$.  Let $j \in [k]$ where $|M_j| \leq \beta n$ and $e$ is disjoint from the edges of $M_j$.
    We use a switching argument to bound the relative sizes of $A_j$ and $B$. Let $\ell \in \mathbb{N}$ and $\gamma, \nu,\tau \in (0,1)$ such that $\beta \ll 1/\ell \ll \gamma \ll \nu \ll \tau \ll  \eps, \alpha$. For $(N_1, \dots, N_k) \in A_j$, we define a $\textit{switch}$ of $(N_1, \dots, N_k)$ as a length-$(2\ell)$ cycle containing $e$ consisting of alternating edges in $N_j$ and $K_{n, n} \setminus \bigcup_{i = 1}^k N_i$. We construct an auxiliary bipartite graph $H$ with bipartition $\{A_j, B\}$ where for $(N_1, \dots, N_k) \in A_j$ and $(N'_1, \dots, N'_k) \in B$, we have $(N_1, \dots, N_k)$ adjacent to $(N'_1, \dots, N'_k)$ in $H$ if and only if $N_i = N'_i$ for all $i \in [k]\setminus\{j\}$ and $N_j \triangle N'_j$ is a switch of $(N_1, \dots, N_k)$.
    
    Let $S$ and $C$ be the parts of the bipartition of $K_{n,n}$ (corresponding to symbols and columns, respectively). For each $N = (N_1, \dots, N_k) \in \mathcal{N}$, we create a digraph $D_{N}$ with $V(D_{N}) = S\setminus V(M_j)$ where for $u,v\in V(D_{N})$, we have an arc from $u$ to $v$ in $D_{N}$ if and only if there exists $x \in C$ such that $ux \in K_{n, n}\setminus \bigcup_{i=1}^k N_i$ and $xv \in N_j$. Note that for all $N \in \cN$, the digraph $D_n$ has $(1 \pm \beta)n$ vertices, minimum semidegree at least $n - k - |M_j| \geq (1/2 + \alpha - \beta)n$ and maximum semidegree at most $n - k \leq (1/2 + \alpha + \beta)n$.  Hence, $D_{N}$ is  $(1/2 + \alpha, 2\beta)$-almost regular, and by a result of K\"uhn and Osthus \cite[Lemma 13.2]{KO13} (with $\alpha - \beta$ and $n - |M_j|$ playing the roles of $\eps$ and $n$, respectively), $D_{N}$ is a robust $(\nu, \tau)$-outexpander.

    Let the endpoints of $e$ be $u \in S$ and $v \in C$. For every $N \in A_j$, note that $d_{H}(N)$ is the number of length-$\ell$ cycles in $D_{N}$ containing $u$. By Lemma~\ref{lem:granetjoos2.5} with $1/2 + \alpha$ and $2\beta$ playing the roles of $\delta$ and $f$, respectively, there are $(1 \pm \gamma)(1/2 + \alpha)^\ell|V(D_N)|^{\ell - 1} = (1\pm 2\gamma)(1/2 + \alpha)^\ell n^{\ell - 1}$ such cycles. For $N = (N_1, \dots, N_j) \in B$, similarly $d_H(N)$ is the number of length-$(\ell - 1)$ paths in $D$ from $x$ to $u$, where $xv \in E(N_j)$. By Lemma~\ref{lem:granetjoos2.5} with the same parameters except $\ell - 1$ playing the role of $\ell$, there are $(1 \pm 2\gamma)(1/2 + \alpha)^{\ell - 1}n^{\ell - 2}$ such paths.

    We now use a double counting argument to bound the relative sizes of $A_j$ and $B$ by counting edges in $H$. Since $H$ is bipartite,
    \[
        \sum_{N \in A_j}{d_{H}(N)} = |E(H)| = \sum_{N \in B}d_H(N).
    \]
    Combining the equality above with our lower bound on $d_H(N)$ for $N \in A_j$ and our upper bound on $d_H(N)$ for $N \in B$,
    \[
        |A_j|(1 - 2\gamma)(1/2 + \alpha)^{\ell}n^{\ell - 1} \leq |B|(1 + 2\gamma)(1/2 + \alpha)^{\ell - 1}n^{\ell - 2}.
    \]
    Combining the equality above with our upper bound on $d_H(N)$ for $N \in A_j$ and our lower bound on $d_H(N)$ for $N \in B$,
    \[
        |A_j|(1 + 2\gamma)(1/2 + \alpha)^{\ell}n^{\ell - 1} \geq |B|(1 - 2\gamma)(1/2 + \alpha)^{\ell - 1}n^{\ell - 2}.
    \]
    Combining the inequalities above, we have
    \[
        \frac{1 - 2\gamma}{1 + 2\gamma}|B| \leq (1/2 + \alpha)n|A_j| \leq \frac{1 + 2\gamma}{1 - 2\gamma}|B|,
    \]
    and since $\gamma \ll \eps$ and $k = \lfloor (1/2 - \alpha)n\rfloor$, we have
    \[
        (1-\eps)|B| \leq (n-k)|A_j| \leq (1+\eps)|B|,
    \]
    as desired.
\end{proof} 

Now we can prove Lemma~\ref{lem:prob1elem-graphtheory}, which we note also yields a proof of Lemma~\ref{lem:prob1elem}.  

\begin{proof}[Proof of Lemma~\ref{lem:prob1elem-graphtheory}]
    Let $e$ be the edge of $\bigcup_{i=1}^k M'_i$ that is not in $\bigcup_{i=1}^k M_i$, and let $\ell \in [k]$ be the unique integer for which $e \in M'_\ell$.  
    Let $\cN \subseteq \cN_{k,n}$ be the set of $(N_1, \dots, N_k)\in\cN_{k,n}$ satisfying $M_i \subseteq N_i$ for all $i \in [k]$.
    By a result of Bowditch and Dukes \cite[Corollary 1.4]{BD19}, since the partial Latin rectangle represented by $(M_1, \dots, M_k)$ is $\beta n$-sparse where $\beta \ll 0.04$, it can be completed into a Latin rectangle.  Hence, there is at least one $(N_1, \dots, N_k) \in \cN_{k,n}$ satisfying $M_i \subseteq N_i$ for all $i \in [k]$, so $|\mathcal{N}| > 0$. 
    For each $j \in [k]$, let $A_j \subseteq \cN$ be the set of $(N_1, \dots, N_k) \in \cN$ such that $e \in N_j$, and let $B = \cN \setminus \bigcup_{i=1}^k A_i$.  Note that
    \begin{equation}\label{eqn:probability-main}
        \ProbCond{M'_i \subseteq \randPM_i~\forall i\in[k]}{M_i \subseteq \randPM_i~\forall i\in[k]} = \frac{|A_\ell|}{|\cN|}
    \end{equation}
    and
    \begin{equation}\label{eqn:size-of-N}
        |\cN| = |B| + \sum_{i=1}^k |A_i|.
    \end{equation}

    Let $\mathcal{I}$ be the set of $i \in [k]$ such that $e$ is not disjoint from the edges of $M_i$. 
    Since $\bigcup_{i=1}^{k}{M_i}$ is $\beta n$-sparse, each endpoint of $e$ is saturated by at most $\beta n$ matchings of $M_1, \dots, M_k$, so
    \begin{equation}\label{eqn:I-is-small}
        |\mathcal{I}| \leq 2\beta n.
    \end{equation}
    Moreover,
    \begin{equation}\label{eqn:no-extensions-for-I}
        |A_i| = 0 \text{ for every } i \in \mathcal I,
    \end{equation}
    because no matching containing $M_i$ for $i \in \mathcal I$ contains $e$.

    Let $\eps' \in (0, 1)$ such that $\beta \ll \eps' \ll \eps, \alpha$. By Lemma~\ref{lem:switching} with the same parameters except $\eps'$ playing the role of $\eps$, 
    \begin{equation}\label{eqn:applying-switching}
        |A_i| = \frac{1\pm\eps'}{n-k}|B| \text{ for every } i \in [k]\setminus \mathcal I,
    \end{equation}
    since $e$ is disjoint from $M_i$ for $i \in [k]\setminus \mathcal I$.
    
    Partitioning the summation $\sum_{i = 1}^{k}{|A_i|}$ around $\mathcal{I}$, by \eqref{eqn:size-of-N} and \eqref{eqn:no-extensions-for-I}, we have
    \[
        |\mathcal{N}| = |B| + \sum_{i \in \mathcal{I}}{|A_i|} + \sum_{j \in [k]\setminus \mathcal{I}}{|A_j|} = |B| + \sum_{i \in [k]\setminus \mathcal{I}}{|A_i|}.
    \]
    This equation, together with \eqref{eqn:I-is-small} and \eqref{eqn:applying-switching}, implies that 
    \[
        |B| + (k-2\beta n)\left(\frac{1-\eps'}{n-k}\right)|B| \leq |\mathcal{N}| \leq |B| + k\left(\frac{1+\eps'}{n-k}\right)|B|.
    \]
    Since $e \in M'_\ell$ and $M_\ell = M'_\ell\setminus\{e\}$, we have $\ell \in [k]\setminus \mathcal I$, so the previous inequality together with \eqref{eqn:applying-switching}, using also that $k \leq n / 2$, yields the upper bound
    \[
        \frac{|A_\ell|}{|\cN|} \leq \frac{(1 + \eps')|B|/(n - k)}{|B|+(k-2\beta n)\left(\frac{(1-\eps')|B|}{n-k}\right)} = \frac{1 + \eps'}{n-k+(k-2\beta n)(1-\eps')} \leq \frac{1 + \eps'}{n(1 - 2\beta n - \eps'/2)}
    \]
    and the lower bound
    \[
        \frac{|A_\ell|}{|\cN|} \geq \frac{(1-\eps')|B|/(n-k)}{|B|+k\left(\frac{(1+\eps')|B|}{n-k}\right)} = \frac{1-\eps'}{n-k+k(1+\eps')} \geq \frac{1 + \eps'}{n(1 + \eps'/2)}.
    \]
    As $\beta \ll \eps' \ll \eps, \alpha$, by the previous two inequalities and \eqref{eqn:probability-main}, we have
    \[
        \ProbCond{M'_i \subseteq \randPM_i~\forall i\in[k]}{M_i \subseteq \randPM_i~\forall i\in[k]} = \frac{1\pm\eps}{n}
    \]
    as desired.
\end{proof}

We conclude this section with a brief discussion of the condition $k \leq (1/2 - \alpha)n$ in Theorem~\ref{thm:rectanglepatternversion}.  In Lemma~\ref{lem:switching}, we constructed an auxiliary digraph from a $k \times n$ Latin rectangle with $n$ vertices and minimum semidegree close to $n - k$.  For $k \leq (1/2 - \alpha)n$, this digraph is a robust outexpander; however, for $k > n/2$, this digraph may not even be connected, and our approach breaks down.  On the other hand, if we assume $k = o(n)$, then a simpler argument works for Lemma~\ref{lem:switching} without considering robust expansion.  In this case, we could use switches on cycles of length six.


\section{Proof of Theorems \ref{thm:nolss} and \ref{thm:MWforLRs}}\label{sect:nolss}

In this section, we prove Theorems \ref{thm:nolss} and \ref{thm:MWforLRs}.
Recall Theorem~\ref{thm:MWforLRs}\ref{3x3inLR} follows from Corollary~\ref{cor:SSexpectation}, so we only need to prove Theorem~\ref{thm:MWforLRs}\ref{>4inLR}.  First we use Theorem~\ref{thm:rectanglepatternversion} to bound the expected number of order-$r$ subsquares containing at most $m$ symbols in a uniformly random Latin rectangle, as follows.

\begin{lemma}\label{lem:expectation-of-lss-in-rectangle}
    Let $1/n \ll \beta  \ll \eps, \alpha < 1$, and let $k < (1/2 - \alpha)n$. 
    If $\randLR \in \setLR$ is a uniformly random $k \times n$ Latin rectangle and $m, r \in [n]$ where $r \leq \beta n$, then
    \begin{equation*}
         \Expect{SS_{r, m}(\randLR)} \leq \left((1 + \eps)\frac{m}{n}\right)^{r^2}\binom{n}{r}\binom{k}{r}\binom{n}{m}.
    \end{equation*} 
\end{lemma}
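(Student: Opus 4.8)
## Proof proposal for Lemma~\ref{lem:expectation-of-lss-in-rectangle}

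\textbf{Overall strategy.} The plan is a union bound via linearity of expectation: enumerate all possible "locations" for an order-$r$ subsquare using at most $m$ symbols, bound the probability each such location actually hosts a Latin subsquare using Theorem~\ref{thm:rectanglepatternversion}, and multiply. An order-$r$ subsquare of $\randLR$ is determined by a choice of $r$ rows (out of $k$), $r$ columns (out of $n$), and the event that the $r \times r$ array they induce is a Latin square. Rather than also summing over which symbols appear, I will bound the probability that a \emph{fixed} $r\times r$ sub-array is Latin by summing over the at most $\binom{n}{m}$ choices of an $m$-subset $T \subseteq [n]$ of symbols and, for each, over the Latin squares on $T$ that could fill those cells — but it is cleaner to bound things as follows.

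\textbf{Key steps.} First, fix a set $R$ of $r$ rows and a set $C$ of $r$ columns; there are $\binom{k}{r}\binom{n}{r}$ such choices. Let $X_{R,C}$ be the indicator that the induced $r \times r$ array is a Latin square using at most $m$ symbols, so $SS_{r,m}(\randLR) = \sum_{R,C} X_{R,C}$ and it suffices to show $\Prob{X_{R,C}=1} \le ((1+\eps)m/n)^{r^2}\binom{n}{m}$. Second, union-bound over the symbol set: if the induced array is Latin and uses at most $m$ symbols, then there is some $m$-set $T$ with all $r^2$ entries landing in $T$, and moreover the array is then \emph{some} fixed Latin square $L$ on symbol set $T$; so
\[
\Prob{X_{R,C}=1} \le \sum_{T \in \binom{[n]}{m}} \Prob{\text{the array equals some order-}r\text{ Latin square on }T}.
\]
Third — and this is the crux — for a \emph{single} order-$r$ Latin square $L$ on a fixed symbol set $T$, placing it in rows $R$ and columns $C$ is exactly requiring $\randLR$ to contain a specific partial Latin rectangle $P_L$ with $\ell = r^2$ nonempty entries. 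I claim $P_L$ is $\beta n$-sparse: each of its rows and columns has exactly $r \le \beta n$ nonempty entries, and each symbol is used exactly $r \le \beta n$ times. Hence Theorem~\ref{thm:rectanglepatternversion} (applied with $\eps$, $\alpha$ and the given $\beta$) gives $\Prob{P_L \subseteq \randLR} \le ((1+\eps)/n)^{r^2}$. Fourth, I must \emph{not} sum over all $r \times r$ Latin squares $L$ on $T$, since there can be far more than $(n/m)^{r^2}$ of them and the bound would blow up. Instead I sum over the $\binom{n}{m}$ choices of $T$ only, absorbing the count of Latin squares into the probability: for a \emph{fixed} $T$, the events "$\randLR$ restricted to $R\times C$ equals $L$" over distinct $L$ on $T$ are disjoint, so their probabilities sum to at most $1$ — but that only gives $\binom{n}{m}$, not the claimed $(m/n)^{r^2}\binom{n}{m}$. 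The right move is: the event "array equals some Latin square on $T$" is contained in the event "every entry of the array lies in $T$", which is the event that $\randLR$ contains the partial Latin rectangle obtained by... no — that is not a single partial rectangle. So instead, observe that this event is contained in "the entry in the top-left cell $(R_1,C_1)$ lies in $T$ \emph{and} $\dots$" — again not a partial rectangle constraint directly. The clean resolution: write the event as a disjoint union over $L$, and for each $L$ bound $\Prob{P_L \subseteq \randLR}\le((1+\eps)/n)^{r^2}$; then the number of $L$ on a fixed $T$ that we need is at most $(\,$number of order-$r$ Latin squares on $m$ symbols$\,)$, but crucially we only need it to be at most $m^{r^2}$ trivially (each of the $r^2$ cells has $\le m$ options). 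Thus $\Prob{X_{R,C}=1}\le \binom{n}{m}\cdot m^{r^2}\cdot((1+\eps)/n)^{r^2} = \binom{n}{m}\big((1+\eps)m/n\big)^{r^2}$. Finally, multiply by $\binom{k}{r}\binom{n}{r}$ and apply linearity of expectation.

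\textbf{Main obstacle.} The delicate point is the bookkeeping in the union bound so that the exponent on $m/n$ comes out as $r^2$ and not larger: one must resist summing over symbol sets \emph{and} independently over Latin squares on them, and instead combine "choose $T$" ($\binom{n}{m}$ ways) with the trivial cellwise count "$m^{r^2}$ completions" while pulling the $1/n$ factor per cell from Theorem~\ref{thm:rectanglepatternversion}. Verifying the $\beta n$-sparsity of $P_L$ (rows, columns, and symbols each used exactly $r\le\beta n$ times) is routine, as is checking that the hypotheses $1/n \ll \beta \ll \eps,\alpha$ of Theorem~\ref{thm:rectanglepatternversion} are met with the same constants. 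Everything else is linearity of expectation and a product of binomials, so no further estimates are needed.
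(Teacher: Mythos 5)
Your proof is correct and is essentially the same as the paper's: both count the at most $\binom{k}{r}\binom{n}{r}\binom{n}{m}m^{r^2}$ candidate $\beta n$-sparse partial Latin rectangles with $r$ nonempty rows and columns and at most $m$ symbols, apply Theorem~\ref{thm:rectanglepatternversion} to bound each containment probability by $((1+\eps)/n)^{r^2}$, and sum by linearity of expectation. The paper states the count directly rather than first fixing $R,C$ and bounding $\Prob{X_{R,C}=1}$, but the bookkeeping (including the crucial $m^{r^2}$ cellwise bound in place of counting order-$r$ Latin squares) is identical.
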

\begin{proof}

Since $r \leq \beta n$, a partial $k \times n$ Latin rectangle with at most $r$ nonempty rows and columns is $\beta n$-sparse. Therefore, by linearity of expectation and Theorem~\ref{thm:rectanglepatternversion}, since there are at most $\binom{n}{r}\binom{k}{r}\binom{n}{m}m^{r^2}$ partial $k \times n$ Latin rectangles with $r$ nonempty rows and columns and at most $m$ symbols in a $k \times n$ Latin rectangle, 
    \begin{equation*}
         \Expect{SS_{r, m}(\randLR)} \leq \left((1 + \eps)\frac{m}{n}\right)^{r^2}\binom{n}{r}\binom{k}{r}\binom{n}{m},
    \end{equation*} 
as desired.
\end{proof}

Using Lemma~\ref{lem:expectation-of-lss-in-rectangle}, we can bound the expected number of order-$m$ Latin subsquares in a uniformly random Latin rectangle.  
First, we show that the expected number of large Latin subsquares is extremely small.  For $m > \beta n$, these subsquares are not $\beta n$-sparse, so we cannot directly apply Theorem~\ref{thm:rectanglepatternversion}.  However, they contain $\beta n$-sparse subsquares that are unlikely to appear.  Here, we use that the maximum order of a proper Latin subsquare in a $k\times n$ Latin rectangle is at most $n / 2$, so in particular, every proper Latin subsquare uses at most $n / 2$ symbols.

\begin{lemma}\label{lem:prob-of-large-lss}
    Let $1/n \ll \alpha < 1$, and let $k < (1/2 - \alpha)n$. 
    If $\randLR \in \setLR$ is a uniformly random $k \times n$ Latin rectangle and $m \geq n^{3/4}$, then
    \begin{equation*}
        \Expect{SS_{m}(\randLR)} \leq \exp(n^{-3/2}/10).
    \end{equation*}
\end{lemma}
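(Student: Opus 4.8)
The plan is to split the argument according to whether $m \le \lfloor \beta n \rfloor$. First fix $\eps_0 := 1/2$, and then choose $\beta > 0$ small enough in terms of $\alpha$ that Theorem~\ref{thm:rectanglepatternversion} and Lemma~\ref{lem:expectation-of-lss-in-rectangle} both apply with $\eps_0$ in the role of $\eps$ and, in addition, $(1 + \eps_0)\beta < e^{-1}$. We may assume $n^{3/4} \le m \le k$, since otherwise $SS_m(\randLR) \equiv 0$; in particular $m < n/2$, which is an instance of the fact that a proper Latin subsquare of $\randLR$ uses at most $n/2$ symbols. In both regimes the goal is to prove the single bound
\begin{equation}\label{eq:lss-unified}
    \Expect{SS_m(\randLR)} \le \binom{k}{m}\binom{n}{m}^2\left(\frac{(1 + \eps_0)m}{n}\right)^{r^2}, \qquad r := \min\{m,\, \lfloor \beta n \rfloor\},
\end{equation}
after which the conclusion will follow from a short computation.

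When $m \le \lfloor \beta n \rfloor$, every order-$m$ Latin subsquare of $\randLR$ is $\beta n$-sparse and uses exactly $m$ symbols, so $SS_m = SS_{m,m}$ and \eqref{eq:lss-unified} (with $r = m$) is exactly Lemma~\ref{lem:expectation-of-lss-in-rectangle} applied with $\eps_0, m, m$ in place of $\eps, r, m$. When $m > \lfloor \beta n \rfloor$, I would instead write
\[
    \Expect{SS_m(\randLR)} = \sum_{R,C} \Prob{\randLR[R,C] \text{ is a Latin subsquare}},
\]
the sum ranging over all pairs consisting of an $m$-subset $R$ of the rows and an $m$-subset $C$ of the columns of $\randLR$, where $\randLR[R,C]$ denotes the corresponding $m\times m$ subarray. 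Fix such $R$ and $C$ and fix arbitrary $r$-subsets $R' \subseteq R$ and $C' \subseteq C$ with $r = \lfloor \beta n \rfloor$. If $\randLR[R,C]$ is a Latin subsquare, then it uses exactly $m$ symbols, so $\randLR[R',C']$ is a fully populated, $\beta n$-sparse partial Latin rectangle using at most $m$ symbols; hence the event $\{\randLR[R,C] \text{ is a Latin subsquare}\}$ is contained in $\bigcup_Q \{Q \subseteq \randLR\}$, where $Q$ ranges over the partial Latin rectangles on rows $R'$ and columns $C'$ that use at most $m$ symbols. There are at most $\binom{n}{m} m^{r^2}$ such $Q$ — choose the at most $m$ symbols, then fill the $r\times r$ grid — and $\Prob{Q \subseteq \randLR} \le ((1 + \eps_0)/n)^{r^2}$ for each, by Theorem~\ref{thm:rectanglepatternversion}, since $Q$ is $\beta n$-sparse with $r^2$ nonempty entries. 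Summing over $Q$ and then over $R, C$ yields \eqref{eq:lss-unified}; the bound $m < n/2$ (i.e., the ``at most $n/2$ symbols'' fact) is what keeps $(1 + \eps_0)m/n$ below $1$.

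To finish, I would estimate the right-hand side of \eqref{eq:lss-unified} using the crude bound $\binom{k}{m}\binom{n}{m}^2 \le n^{3m}$. If $m > \beta n$, then $r = \lfloor \beta n \rfloor$, so $r^2 \ge \beta^2 n^2/2$ for large $n$, while $(1 + \eps_0)m/n < 3/4$; since also $m < n$, this gives $\Expect{SS_m(\randLR)} \le n^{3m}(3/4)^{\beta^2 n^2/2} \le \exp(-n^{3/2}/10)$ for large $n$. If $n^{3/4} \le m \le \beta n$, then $r = m$ and $(1 + \eps_0)m/n \le (1 + \eps_0)\beta < e^{-1}$, so
\[
    \ln \Expect{SS_m(\randLR)} \le 3m\ln n - m^2 = -(m^2 - 3m\ln n),
\]
and since $m^2 - 3m\ln n$ is increasing in $m$ on $[n^{3/4}, \beta n]$ and already equals $n^{3/2} - 3n^{3/4}\ln n = (1-o(1))n^{3/2}$ at $m = n^{3/4}$, the right-hand side is at most $-n^{3/2}/10$ for large $n$. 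In either case $\Expect{SS_m(\randLR)} \le \exp(-n^{3/2}/10)$, which yields the lemma.

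I expect the only genuine difficulty to be the lower end $m \approx n^{3/4}$ of the range, where the estimate is essentially tight for this method: an order-$m$ Latin subsquare contributes only about $m^2$ nearly independent factors of $1/n$ to its probability, while there are roughly $e^{\Theta(m\log n)}$ candidate subsquares, and the two balance only because $m^2$ dominates $m\log n$ once $m \ge n^{3/4}$. The range $m > \beta n$ is comparatively easy once one passes to a $\beta n$-sparse $r \times r$ subpattern, since the $\binom{n}{m}$-sized union bound over symbol sets is negligible against $((1 + \eps_0)m/n)^{r^2}$ with $r^2 = \Theta(n^2)$. No ingredient beyond Theorem~\ref{thm:rectanglepatternversion} (equivalently Lemma~\ref{lem:expectation-of-lss-in-rectangle}) is needed; the work is in the case split and in the choice of $\beta$ small relative to $\alpha$.
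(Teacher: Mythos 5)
Your proof is correct, but it takes a slightly different route from the paper's. The paper avoids a case split entirely: it fixes $r = \lceil n^{3/4} \rceil$ once and for all, bounds $\Expect{SS_m(\randLR)} \leq \Expect{SS_{r,m}(\randLR)}$, applies Lemma~\ref{lem:expectation-of-lss-in-rectangle} with this single $r$, and uses only the crude bounds $\binom{n}{r}, \binom{k}{r}, \binom{n}{m} \leq 2^n$ and $m \leq n/2$ (so $(1+\eps)m/n < 3/4$) to obtain $\exp(-n^{3/2}\log(4/3) + n\log 8) \leq \exp(-n^{3/2}/10)$. You instead split at $m = \lfloor\beta n\rfloor$, using $r = m$ below the threshold and $r = \lfloor\beta n\rfloor$ above; this works, and your separate analyses of the two exponents are fine, but the split is unnecessary — setting $r = \lceil n^{3/4}\rceil$ throughout makes the exponent $r^2 \geq n^{3/2}$ uniformly, and the $8^n$-type overhead is dominated regardless of whether $m$ is near $n^{3/4}$ or near $n/2$. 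Also, your re-derivation of \eqref{eq:lss-unified} from Theorem~\ref{thm:rectanglepatternversion} in the $m > \beta n$ case is not needed: Lemma~\ref{lem:expectation-of-lss-in-rectangle} as stated already allows $m > r$ (its only constraint is $r \leq \beta n$), so you could have invoked it directly with $r = \lfloor\beta n\rfloor$; in fact its bound $\binom{n}{r}\binom{k}{r}\binom{n}{m}$ is slightly tighter than your $\binom{k}{m}\binom{n}{m}^2$. Finally, note that the lemma as printed has a sign typo — it should read $\exp(-n^{3/2}/10)$, which is what both you and the paper actually establish.
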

\begin{proof}
    Let $r = \lceil n^{3/4} \rceil$.  Since $m \geq n^{3/4}$,
    \begin{equation*}
        \Expect{SS_m(\randLR)} \leq \Expect{SS_{r, m}(\randLR)}.
    \end{equation*}
    By Lemma~\ref{lem:expectation-of-lss-in-rectangle}, since $\binom{n}{r},\binom{k}{r},\binom{n}{m} \leq 2^n$, 
    \begin{equation*}
          \Expect{SS_{r, m}(\randLR)} \leq \left((1 + \eps)\frac{m}{n}\right)^{r^2}\binom{n}{r}\binom{k}{r}\binom{n}{m} \leq \left((1 + \eps)\frac{m}{n}\right)^{n^{3/2}}8^n.
    \end{equation*}
    Since the maximum order of a proper Latin subsquare of $\randLS$ is $n/2$, we may assume $m \leq n/2$.  Therefore,
    \begin{equation*}
    \left((1 + \eps)\frac{m}{n}\right)^{n^{3/2}}8^n \leq \exp\left(-n^{3/2}\log\left(\frac{2}{1 + \eps}\right) + n \log 8\right).
    \end{equation*}
    Since $1 + \eps < 3/2$, we have that $\log(2 / (1 + \eps) ) \geq \log(4/3) \geq 1/9$. Therefore, combining the inequalities above, we have
    \begin{equation*}
        \Expect{SS_m(\randLR)} \leq \exp\left(-n^{3/2} / 9 + n \log 8\right) \leq \exp\left(-n^{3/2} / 10\right),
    \end{equation*}
    as desired. 
\end{proof}

With an argument similar to that of the previous lemma, we can use Lemma~\ref{lem:expectation-of-lss-in-rectangle} to bound the expected number of order-$m$ Latin subsquares in a uniformly random Latin rectangle for $m < n^{3/4}$, as follows.

\begin{lemma}\label{lem:prob-of-small-lss}
    Let $1/n \ll \alpha < 1$, and let $k < (1/2 - \alpha)n$. 
    If $\randLR \in \setLR$ is a uniformly random $k \times n$ Latin rectangle and $m \leq n^{3/4}$, then
    \begin{equation*}
        \Expect{SS_{m}(\randLR)} \leq 3^{m^2}\left(\frac{m}{n}\right)^{m^2 - 3m}.
    \end{equation*}
    Moreover, if $m' \geq m \geq 2$ and $m' \leq n^{3/4}$, then
    \begin{equation*}
        \Expect{SS_{m'}(\randLR)} \leq 3^{m^2}\left(\frac{m}{n}\right)^{m^2 - 3m}.
    \end{equation*}
\end{lemma}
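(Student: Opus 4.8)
The plan is to deduce both inequalities from Lemma~\ref{lem:expectation-of-lss-in-rectangle} together with elementary estimates, mirroring the proof of the previous lemma. First I fix auxiliary constants $\eps,\beta$ with $1/n \ll \beta \ll \eps \ll \alpha$ and $0<\eps<1/10$; since $m \le n^{3/4} \le \beta n$ for $n$ large, I may apply Lemma~\ref{lem:expectation-of-lss-in-rectangle} with $r = m$. As $SS_m(\randLR) = SS_{m,m}(\randLR)$, this gives $\Expect{SS_m(\randLR)} \le \big((1+\eps)m/n\big)^{m^2}\binom nm^2\binom km$. Bounding $\binom nm^2\binom km \le (en/m)^{3m}$ (using $k<n$) and simplifying the powers of $m/n$, one gets $\Expect{SS_m(\randLR)} \le (1+\eps)^{m^2}e^{3m}(m/n)^{m^2-3m}$, so it suffices to verify $(1+\eps)^{m^2}e^{3m}\le 3^{m^2}$. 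Taking logarithms and dividing by $m$, this is equivalent to $3 \le m\log\!\big(3/(1+\eps)\big)$, which holds for all $m \ge 3$ because $\log(3/(1+\eps)) > 1$. For the two remaining values the claimed bound $3^{m^2}(m/n)^{m^2-3m}$ equals $3n^2$ (for $m=1$) and $81n^2/4$ (for $m=2$), and both are checked directly: $SS_1(\randLR) = kn$ always, and $\Expect{SS_2(\randLR)} \le \big((1+\eps)2/n\big)^4\binom n2^2\binom k2 \le (1+\eps)^4 n^2/2 < 81n^2/4$.

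For the ``moreover'' statement I first apply the part already proved with $m'$ in place of $m$, which is legitimate since $2 \le m' \le n^{3/4}$; this yields $\Expect{SS_{m'}(\randLR)} \le 3^{m'^2}(m'/n)^{m'^2-3m'}$. It then remains to show that $g(x) := 3^{x^2}(x/n)^{x^2-3x}$ is non-increasing on $[2,n^{3/4}]$ for $n$ sufficiently large, as then $g(m') \le g(m)$ and we are done. Writing $\log g(x) = x^2\log 3 + (x^2-3x)\log(x/n)$ and using $\log(x/n) \le -\tfrac14\log n$ for $x \le n^{3/4}$ together with $2x-3 \ge x/12$ for $x \ge 2$, one computes $\tfrac{d}{dx}\log g(x) = 2x\log 3 + (2x-3)\log(x/n) + (x-3) \le x\big(2\log 3 + 1 - \tfrac1{12}\log n\big) < 0$ once $n$ is large enough, giving the monotonicity.

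I expect no genuine obstacle: the lemma is essentially a bookkeeping consequence of Lemma~\ref{lem:expectation-of-lss-in-rectangle}. The only points needing a little care are (i) arranging the auxiliary constants $\eps,\beta$ so that the absolute constant $3$ in the statement is large enough to absorb the factors $(1+\eps)^{m^2}$ and $e^{3m}$ produced by Lemma~\ref{lem:expectation-of-lss-in-rectangle} and the binomial estimates — this is exactly what forces $m \ge 3$ in the main computation and hence the separate treatment of $m \in \{1,2\}$ — and (ii) verifying the monotonicity of $g$ all the way down to $x = 2$, where the exponent $x^2 - 3x$ becomes negative, which is the reason the ``moreover'' part is stated only for $m \ge 2$.
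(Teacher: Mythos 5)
Your proof is correct and follows essentially the same route as the paper: apply Lemma~\ref{lem:expectation-of-lss-in-rectangle} with $r=m$, bound $\binom{n}{m}^2\binom{k}{m}$ by $(en/m)^{3m}$, absorb the factor $(1+\eps)^{m^2}e^{3m}$ into $3^{m^2}$, and prove the \emph{moreover} part by checking that $g(x)=3^{x^2}(x/n)^{x^2-3x}$ is decreasing on $[2,n^{3/4}]$ via $\tfrac{d}{dx}\log g(x)<0$.

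One point where you are actually more careful than the paper: as you observe, the inequality $(1+\eps)^{m^2}e^{3m}\leq 3^{m^2}$ (equivalently $e(1+\eps)\leq 3$ raised to $m^2$ with $e^{3m}\leq e^{m^2}$) requires $m\geq 3$, and the paper's proof of the first displayed inequality applies this step without comment, so it does not literally cover $m\in\{1,2\}$; you check those two cases by hand, and they do hold. (In the paper this gap is harmless because the lemma is only ever invoked with $m\geq 4$ or via the \emph{moreover} clause with $m\geq 2$, which already restricts to $m\geq 2$.) Your monotonicity computation has a small arithmetic slip — from $2x-3\geq x/12$ and $\log(x/n)\leq -\tfrac14\log n$ you get $-\tfrac{x}{48}\log n$, not $-\tfrac{x}{12}\log n$ — but the conclusion $\tfrac{d}{dx}\log g(x)<0$ for large $n$ is unaffected, and the paper's bound (using $2m-3\geq m/2$, giving $-\tfrac{m}{8}\log n$) is just a slightly tighter version of the same estimate.
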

\begin{proof}
Let $1 / n \ll \eps \ll \alpha$.  
By Lemma~\ref{lem:expectation-of-lss-in-rectangle} with $m$ playing the role of $r$ and $m$, since $e(1 + \eps) \leq 3$, 
    $$\Expect{SS_m(\randLR)} \leq \left((1 + \eps)\frac{m}{n}\right)^{m^2}\binom{n}{m}^3 \leq \left((1 + \eps)\frac{m}{n}\right)^{m^2}\left(\frac{en}{m}\right)^{3m} \leq 3^{m^2} \left(\frac{m}{n}\right)^{m^2 - 3m},$$
    as desired.
    
Now we show the function $m \mapsto 3^{m^2}(m / n)^{m^2 - 3m}$ is decreasing in $m$ for $2 \leq m \leq n^{3/4}$.  The derivative of this function with respect to $m$ is
\begin{equation*}
    3^{m^2}\left(\frac{m}{n}\right)^{m^2 - 3m}\left(m\left(2\log 3 + 1\right) + (2m - 3)\log\left(\frac{m}{n}\right) - 3\right).
\end{equation*}
For sufficiently large $n$, since $2 \leq m \leq n^{3/4}$, 
\begin{equation*}
    m(2\log 3 + 1) + (2m - 3)\log\left(\frac{m}{n}\right) - 3 \leq m(2\log 3 + 1) - (m/2)\log (n^{1/4}) < 0, 
\end{equation*}
so the function is decreasing, as claimed.  Hence, if $m' \geq m \geq 2$ and $m' \leq n^{3/4}$, then
\begin{equation*}
    \Expect{SS_{m'}(\randLR)} \leq 3^{m^2}\left(\frac{m}{n}\right)^{m^2 - 3m},
\end{equation*}
as desired.
\end{proof}

By combining Lemmas \ref{lem:prob-of-large-lss} and \ref{lem:prob-of-small-lss}, we can prove Theorem~\ref{thm:MWforLRs}\ref{>4inLR}.

\begin{proof}[Proof of Theorem~\ref{thm:MWforLRs}\ref{>4inLR}]
    Let $\randLR \in \setLR$ be a uniformly random $k \times n$ Latin rectangle, and let $1 / C \ll 1$.
    By Markov's Inequality, $\randLR$ contains a Latin subsquare of order greater than $3$ with probability at most
    \begin{equation*}
        \sum_{m=4}^{\lfloor n / 2\rfloor}\Expect{SS_m(\randLR)}.
    \end{equation*}
   By Lemma~\ref{lem:prob-of-large-lss},
   \begin{equation*}
       \sum_{m=\lceil n^{3/4}\rceil}^{\lfloor n / 2\rfloor}\Expect{SS_m(\randLR)} \leq n\exp(n^{-3/2}/10) \leq n^{-4}.
   \end{equation*}
   By Lemma~\ref{lem:prob-of-small-lss} with $5$ playing the role of $m$,
   \begin{equation*}
        \sum_{m=5}^{\lfloor n^{3/4}\rfloor}\Expect{SS_m(\randLR)} \leq n3^{5^2}\left(\frac{5}{n}\right)^{5^2 - 15} \leq n^{-4},
   \end{equation*}
   and again by Lemma~\ref{lem:prob-of-small-lss} with $4$ playing the role of $m$,
   \begin{equation*}
       \Expect{SS_4(\randLR)} \leq 3^{4^2}\left(\frac{4}{n}\right)^{4^2 - 12} \leq Cn^{-4}.
   \end{equation*}
   The result follows.
\end{proof}


The remainder of the section is devoted to the proof of Theorem~\ref{thm:nolss}.
 For the following lemma we will introduce the idea of an induced Latin rectangle. For $L \in \mathcal{L}_n$ and $R \subset [n]$, let $L|_{R}$ denote the $|R| \times n$ Latin rectangle induced by the rows indexed by $R.$
\begin{lemma}\label{lem:expectedvalue}
Let $n \in \mathbb{N}$ and $m, k \in [n]$. If $\randLS \in \mathcal{L}_n$ is a uniformly random order-$n$ Latin square, then 

$$\Expect{SS_{m}(\randLS)}\binom{n - m}{k - m} = \Expect{SS_{m}(\randLS|_{[k]})}\binom{n}{k}.$$
\end{lemma}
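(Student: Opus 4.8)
The plan is to prove the identity by double counting over pairs $(L, R)$, where $L \in \mathcal{L}_n$ and $R$ is a $k$-element set of rows, exploiting the symmetry of $\mathcal{L}_n$ under row permutations.

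First I would note that, by relabeling rows, $\Expect{SS_m(\randLS|_R)}$ does not depend on the choice of $R \in \binom{[n]}{k}$: for any such $R$ there is a permutation $\sigma$ of $[n]$ with $\sigma([k]) = R$, and the map $L \mapsto \sigma L$ obtained by permuting the rows of $L$ according to $\sigma$ is a bijection of $\mathcal{L}_n$ satisfying $SS_m((\sigma L)|_R) = SS_m(L|_{[k]})$, since reordering the rows of a Latin rectangle changes neither whether it is Latin nor its collection of order-$m$ subsquares. Reindexing the sum $\sum_{L \in \mathcal{L}_n} SS_m(L|_R)$ along this bijection therefore gives $\Expect{SS_m(\randLS|_R)} = \Expect{SS_m(\randLS|_{[k]})}$ for every $R$. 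Summing over all $\binom{n}{k}$ choices of $R$ and using linearity of expectation,
\[
\binom{n}{k}\,\Expect{SS_m(\randLS|_{[k]})} \;=\; \Expect{\sum_{R \in \binom{[n]}{k}} SS_m(\randLS|_R)}.
\]

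Next I would evaluate the inner sum pointwise for a fixed $L \in \mathcal{L}_n$. Since the entries of $L|_R$ lying in rows of $R$ coincide with the corresponding entries of $L$, an order-$m$ Latin subsquare of $L|_R$ is precisely an order-$m$ Latin subsquare of $L$ whose set of $m$ rows is contained in $R$. Hence each of the $SS_m(L)$ order-$m$ Latin subsquares of $L$ is counted by $SS_m(L|_R)$ for exactly those $R$ that contain its (fixed, $m$-element) row set, and there are $\binom{n-m}{k-m}$ such $R$. (If $k < m$, then both sides of the desired identity vanish, so we may assume $m \le k$.) Therefore $\sum_{R \in \binom{[n]}{k}} SS_m(L|_R) = \binom{n-m}{k-m}\, SS_m(L)$ for every $L$, and taking expectations yields
\[
\Expect{\sum_{R \in \binom{[n]}{k}} SS_m(\randLS|_R)} \;=\; \binom{n-m}{k-m}\,\Expect{SS_m(\randLS)}.
\]
Combining the two displayed equalities gives the claim.

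I do not anticipate a genuine obstacle here; the only step requiring a little care is the first one, namely checking that the expected number of order-$m$ subsquares of $\randLS|_R$ is the same for every $k$-set $R$. This comes down to the fact that row permutations act transitively on the $k$-subsets of $[n]$ and induce measure-preserving bijections of $\mathcal{L}_n$ that are compatible with passing to induced sub-rectangles and preserve the count of Latin subsquares.
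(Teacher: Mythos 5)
Your proof is correct. The paper also proves the lemma by a row-symmetry/double-counting argument, but it does the bookkeeping by averaging over all $n!$ permutations $\pi \in S_n$ together with $m$-subsets $M$ satisfying $\pi(M)\subseteq[k]$, which leads to an intermediate expression with several multinomial factors. Your version instead sums $SS_m(\randLS|_R)$ over $k$-subsets $R$ and cleanly splits the argument into two steps: (i) $\Expect{SS_m(\randLS|_R)}$ is independent of $R$, by the measure-preserving bijection $L\mapsto\sigma L$ of $\mathcal{L}_n$ for a row permutation $\sigma$ with $\sigma([k])=R$; and (ii) the genuinely pointwise identity $\sum_{R\in\binom{[n]}{k}}SS_m(L|_R)=\binom{n-m}{k-m}SS_m(L)$, since each order-$m$ Latin subsquare of $L$ has a fixed $m$-element row set lying in exactly $\binom{n-m}{k-m}$ of the $R$'s. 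This separation makes it transparent which step holds for every individual $L$ and which step uses the uniformity of the distribution, and it avoids the heavier combinatorial manipulation with permutations and factorial counts in the paper's proof. In short: same underlying idea (quotienting by the row-symmetry of $\mathcal{L}_n$), but your decomposition is somewhat cleaner and more elementary.
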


\begin{proof} 
First, note that
\begin{equation}
    \begin{split}
        \Expect{SS_{m}(\randLS|_{[k]})} &= \frac{1}{|\mathcal{L}_n|}\sum_{L \in \mathcal{L}_n}\sum_{M \in {\binom{[k]}{m}}}SS_m(L|_{M}) \\
        &= \frac{1}{n!|\mathcal{L}_n|}\sum_{L \in \mathcal{L}_n}\sum_{\pi \in S_n} \sum\limits_{\substack{M \in {\binom{[n]}{m}} \\ \pi(M) \subseteq [k]}} SS_m(L|_{\pi(M)}) \notag ,
    \end{split}
\end{equation}
where $S_n$ is the set of permutations of $[n]$. For every $L \in \mathcal{L}_n$, 

\begin{equation}
    \begin{split}
        \sum_{\pi \in S_n} \sum\limits_{\substack{M \in {\binom{[n]}{m}} \\ \pi(M) \subseteq [k]}}SS_m(\randLS|_{\pi(M)}) &= \binom{k}{m}(m!)\binom{n - m}{k - m}(k-m)!(n - k)!\sum_{M \in \binom{[n]}{m}}SS_m(L|_{M}) \\
        &= \left.n!\binom{n - m}{k - m}\sum_{M \in \binom{[n]}{m}}SS_m(L|_{M}) \middle/ \binom{n}{k}\right..\notag 
    \end{split}
\end{equation}
Combining the two equations above, we have that 

\begin{equation}
    \begin{split}
    \Expect{SS_{m}(\randLS|_{[k]})} &= \left.\binom{n-m}{k-m}\frac{1}{|\mathcal{L}_n|}\sum_{L \in \mathcal{L}_n}\sum_{M \in \binom{[n]}{m}}SS_m(LS|_{M}) \middle/ \binom{n}{k}\right. \\
    &= \left.\binom{n - m}{k - m}\Expect{SS_{m}(\randLS)} \middle/ \binom{n}{k}\right.\notag, 
    \end{split}
\end{equation}
as desired. 
\end{proof}

We will need the following lemma of Kwan and Sudakov~\cite[Proposition 5]{KS18}, which allows us to bound the expected number of Latin subsquares of a uniformly random Latin square in terms of the number of Latin subsquares of a uniformly random Latin rectangle.
For $L \in \setLR$, let $\mathcal{L}^*(L)$ be the set of order-$n$ Latin squares whose first $k$ rows coincide with $L$.
\begin{lemma}[Kwan and Sudakov~\cite{KS18}]\label{lem:extensionbound} For all $k \times n$ Latin rectangles, $L, L' \in \setLR$,
    $$\frac{|\mathcal{L}^*(L)|}{|\mathcal{L}^*(L')|} \leq e^{O(n \log^2n)}.$$
\end{lemma}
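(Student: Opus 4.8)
The plan is to establish a two-sided bound on $|\mathcal{L}^*(L)|$ that depends only on $n$ and $k$ (not on the particular rectangle $L$), whose upper-to-lower ratio is $e^{O(n\log^2 n)}$; the lemma then follows immediately by applying the upper bound to $L$ and the lower bound to $L'$. Concretely, I would prove that for every $L \in \setLR$,
\[
    \prod_{d=1}^{n-k} n!\left(\frac{d}{n}\right)^{n} \;\leq\; |\mathcal{L}^*(L)| \;\leq\; \prod_{d=1}^{n-k} (d!)^{n/d}.
\]

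To obtain these bounds I would complete $L$ one row at a time and control each step with classical permanent inequalities. Given any $i \times n$ Latin rectangle with $k \leq i < n$, the number of ways to append an $(i+1)$-st row equals the permanent of the $n \times n$ $0$--$1$ matrix $A$ whose $(c,s)$ entry is $1$ exactly when symbol $s$ does not yet appear in column $c$: a valid new row is precisely a perfect matching in the associated bipartite ``availability'' graph. Since each column is missing exactly $n-i$ symbols and each symbol is missing from exactly $n-i$ columns, $A$ has all line sums equal to $n-i$. Br\'egman's theorem then gives $\mathrm{per}(A) \leq \big((n-i)!\big)^{n/(n-i)}$, while the Egorychev--Falikman theorem (van der Waerden's permanent conjecture), applied to the doubly stochastic matrix $A/(n-i)$, gives $\mathrm{per}(A) \geq n!\,\big((n-i)/n\big)^{n} > 0$. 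A short induction on $n-k$ — the upper bound by summing over all choices of the next row, the lower bound by using that every intermediate extension count is strictly positive — telescopes these per-step estimates into the displayed products.

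It then remains to check that the ratio of the two products is $e^{O(n\log^2 n)}$. Taking logarithms, the log-ratio equals $\sum_{d=1}^{n-k}\big(\tfrac{n}{d}\log(d!) - \log(n!) - n\log(d/n)\big)$. By Stirling's approximation, $\tfrac{n}{d}\log(d!) = n\log d - n + O\!\big(\tfrac{n\log(d+1)}{d}\big)$ and $\log(n!) = n\log n - n + O(\log n)$, so each summand collapses to $O\!\big(\tfrac{n\log(d+1)}{d} + \log n\big)$. Summing over $d \leq n$ and using $\sum_{d\leq n}\tfrac{\log d}{d} = O(\log^2 n)$ and $\sum_{d \leq n}\tfrac{1}{d} = O(\log n)$ gives a total of $O(n\log^2 n)$, as required.

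The argument is self-contained modulo the two classical permanent bounds, so I do not expect a genuine obstacle; the points needing care are the telescoping step — in particular making sure the lower-bound induction never degenerates, which is exactly why one invokes the strictly positive van der Waerden bound rather than a bare completion-existence statement — and controlling the Stirling error terms uniformly down to $d=1$, where the factor $\tfrac{n}{d}\log(d!)$ degenerates but contributes only $O(n)$ to the sum.
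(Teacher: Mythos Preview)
The paper does not prove this lemma; it is quoted verbatim from Kwan and Sudakov~\cite{KS18} (their Proposition~5) and used as a black box. Your proposal is correct and is in fact precisely the argument Kwan and Sudakov give: bound the number of row-extensions of an $i\times n$ Latin rectangle above by Br\'egman's inequality and below by the Egorychev--Falikman (van der Waerden) bound applied to the $(n-i)$-regular availability matrix, telescope over $i=k,\dots,n-1$, and then check via Stirling that the ratio of the two resulting products is $e^{O(n\log^2 n)}$. There is nothing to add.
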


Using Lemmas~\ref{lem:prob-of-large-lss}--\ref{lem:extensionbound}, we can bound the expected number of large Latin subsquares in a uniformly random Latin square, as follows.

\begin{lemma}\label{lem:rbyrsubsquaremsymbolbound}
    Let $1/n \ll 1/c \ll 1$. 
    If $\randLS \in \mathcal{L}_n$ is a uniformly random order-$n$ Latin square and $m \geq c\sqrt{n \log n}$, then 
    $$\Expect{SS_m(\randLS)} \leq \exp(- n \log^2 n).$$
\end{lemma}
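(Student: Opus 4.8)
The plan is to transfer the bound on $\Expect{SS_m(\randLR)}$ for random Latin rectangles (Lemmas~\ref{lem:prob-of-large-lss} and~\ref{lem:prob-of-small-lss}) to random Latin squares, using Lemma~\ref{lem:expectedvalue} to relate $\Expect{SS_m(\randLS)}$ to $\Expect{SS_m(\randLS|_{[k]})}$ and Lemma~\ref{lem:extensionbound} to relate $\Expect{SS_m(\randLS|_{[k]})}$ to $\Expect{SS_m(\randLR)}$ for a uniformly random Latin rectangle $\randLR \in \setLR$. The key point is that for $m \geq c\sqrt{n\log n}$ with $c$ large, the bound from Lemma~\ref{lem:prob-of-small-lss} (or Lemma~\ref{lem:prob-of-large-lss}) beats $\exp(-n\log^2 n)$ by a huge margin — roughly $(m/n)^{m^2} \leq (m/n)^{c^2 n \log n}$, which for $m \leq n/2$ is at most $\exp(-\Omega(c^2 n \log^2 n))$ — and so it survives multiplication by the $e^{O(n\log^2 n)}$ losses incurred in the transfer, provided $c$ is chosen large enough relative to the implied constants.

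First I would fix $k = \lfloor(1/2-\alpha)n\rfloor$ for a suitable small absolute constant $\alpha$ (say $\alpha = 1/4$), so that $k < (1/2-\alpha)n$ and Lemmas~\ref{lem:prob-of-large-lss}--\ref{lem:prob-of-small-lss} apply. Since an order-$m$ Latin subsquare of $\randLS$ selects $m$ rows, I would bound $SS_m(\randLS) \leq $ (number of order-$m$ subsquares using only rows in $[k]$ plus those using a row outside $[k]$); more cleanly, apply Lemma~\ref{lem:expectedvalue} directly, which gives
\begin{equation*}
\Expect{SS_m(\randLS)} = \Expect{SS_m(\randLS|_{[k]})}\binom{n}{k}\bigg/\binom{n-m}{k-m}.
\end{equation*}
The ratio $\binom{n}{k}/\binom{n-m}{k-m}$ is at most $2^n$ (indeed at most $(n/(n-k))^m \leq 4^m$), which is negligible. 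Next, by averaging and Lemma~\ref{lem:extensionbound}, for any fixed $L_0 \in \setLR$,
\begin{equation*}
\Expect{SS_m(\randLS|_{[k]})} = \frac{\sum_{L \in \setLR} SS_m(L)\,|\mathcal L^*(L)|}{\sum_{L \in \setLR}|\mathcal L^*(L)|} \leq e^{O(n\log^2 n)}\,\frac{\sum_{L\in\setLR} SS_m(L)}{|\setLR|} = e^{O(n\log^2 n)}\,\Expect{SS_m(\randLR)},
\end{equation*}
where $\randLR \in \setLR$ is uniformly random. Then I would invoke Lemma~\ref{lem:prob-of-large-lss} when $m \geq n^{3/4}$ (giving $\Expect{SS_m(\randLR)} \leq \exp(n^{-3/2}/10) \leq 2$), and Lemma~\ref{lem:prob-of-small-lss} with $m_0 = \lceil c\sqrt{n\log n}\rceil$ playing the role of $m$ when $c\sqrt{n\log n} \leq m \leq n^{3/4}$, giving $\Expect{SS_m(\randLR)} \leq 3^{m_0^2}(m_0/n)^{m_0^2 - 3m_0}$. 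In the first case the whole expression is $e^{O(n\log^2 n)}$, which is \emph{not} good enough by itself — so for $m \geq n^{3/4}$ I would instead note $m \geq n^{3/4} \geq c\sqrt{n\log n}$ and still use the small-$m$ bound at $m_0 = \lceil c\sqrt{n\log n}\rceil$, since Lemma~\ref{lem:prob-of-small-lss}'s monotonicity clause does not directly cover $m > n^{3/4}$ but $SS_m \leq SS_{r,m}$ with $r = \lceil n^{3/4}\rceil$ combined with Lemma~\ref{lem:prob-of-large-lss} handles it; more carefully, I would split at $n^{3/4}$ exactly as in the proof of Theorem~\ref{thm:MWforLRs}\ref{>4inLR} and find the dominant term is $3^{m_0^2}(m_0/n)^{m_0^2-3m_0}$.

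The crux is the arithmetic: with $m_0 \approx c\sqrt{n\log n}$ one has $m_0^2 \approx c^2 n\log n$ and $m_0/n \leq c\sqrt{(\log n)/n} \cdot (1/\sqrt{n}) \cdot \sqrt{n}$... more precisely $\log(m_0/n) \leq -\tfrac12\log n + O(\log\log n)$, so
\begin{equation*}
\log\!\left(3^{m_0^2}(m_0/n)^{m_0^2-3m_0}\right) \leq m_0^2\log 3 - (m_0^2 - 3m_0)\!\left(\tfrac12\log n - O(\log\log n)\right) \leq -\tfrac13 m_0^2 \log n \leq -\tfrac{c^2}{3}\, n \log^2 n
\end{equation*}
for $n$ large. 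Adding back the $O(n\log^2 n)$ from Lemma~\ref{lem:extensionbound} and the $O(n)$ from the binomial ratios, the total exponent is at most $(-c^2/3 + O(1))n\log^2 n \leq -n\log^2 n$ once $c$ is a sufficiently large absolute constant, which is exactly the hierarchy $1/n \ll 1/c \ll 1$ allows us to assume. The main obstacle — really the only delicate point — is bookkeeping the constant $c$ against the unspecified constant hidden in the $e^{O(n\log^2 n)}$ of Lemma~\ref{lem:extensionbound}: one must choose $c$ large enough \emph{after} that constant is fixed, which is consistent with the stated constant hierarchy, so no circularity arises.
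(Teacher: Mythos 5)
Your proposal follows the same route as the paper: reduce $\Expect{SS_m(\randLS)}$ to $\Expect{SS_m(\randLS|_{[k]})}$ via Lemma~\ref{lem:expectedvalue}, transfer to a uniformly random rectangle via Lemma~\ref{lem:extensionbound}, and then invoke the rectangle bounds from Lemmas~\ref{lem:prob-of-large-lss} and~\ref{lem:prob-of-small-lss}, choosing $c$ large enough after the absolute constant in Lemma~\ref{lem:extensionbound} is fixed. (The paper uses $k=\lceil n/3\rceil$ rather than $k=\lfloor(1/2-\alpha)n\rfloor$, but either works.) The arithmetic in your last display is also essentially the paper's.

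The one place you go astray is the case $m\geq n^{3/4}$. You read Lemma~\ref{lem:prob-of-large-lss} as giving $\Expect{SS_m(\randLR)}\leq\exp(n^{-3/2}/10)\leq 2$ and then try to engineer a workaround. That reading comes from a typo in the statement of Lemma~\ref{lem:prob-of-large-lss}: the bound there should be $\exp(-n^{3/2}/10)$, as its own proof shows and as is necessary for the proof of Theorem~\ref{thm:MWforLRs}\ref{>4inLR} (which asserts $n\exp(n^{-3/2}/10)\leq n^{-4}$, clearly false literally but fine with the sign fixed). With the corrected bound, the $m\geq n^{3/4}$ case is immediate: multiplying by the $e^{O(n\log^2 n)}$ loss and the $2^n$ binomial ratio gives $\exp(-n^{3/2}/10 + O(n\log^2 n))\leq\exp(-n\log^2 n)$, since $n^{3/2}\gg n\log^2 n$. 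Your proposed workaround of applying the $m\leq n^{3/4}$ bound at $m_0=\lceil c\sqrt{n\log n}\rceil$ does not cover $m>n^{3/4}$, because the monotonicity clause of Lemma~\ref{lem:prob-of-small-lss} is only stated for $m'\leq n^{3/4}$, and the "dominant term" framing from Theorem~\ref{thm:MWforLRs}\ref{>4inLR} does not transfer: there one sums over $m$, whereas here each fixed $m$ must be bounded individually. So as written your $m\geq n^{3/4}$ case has a gap, but it is entirely attributable to the typo; once corrected, your argument matches the paper's.
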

\begin{proof}

Let $k = \lceil n/3 \rceil$, and let $1 / c \ll 1 / C \ll 1$. By Lemma~\ref{lem:expectedvalue}, 
\begin{equation}\label{eq:probrxrssq}
    \Expect{SS_{m}(\randLS)} =  \Expect{SS_{m}(\randLS|_{[k]})}\left.\binom{n}{k}\middle/\binom{n-m}{k-m} \leq 2^n\Expect{SS_{m}(\randLS|_{[k]})}\right..
\end{equation}
By Lemma~\ref{lem:extensionbound}, for every partial $k \times n$ Latin rectangle $P$, 
\begin{equation*}
    \Prob{\randLS|_{[k]} \supseteq P} \leq \Prob{\mathbf{L'} \supseteq P}\exp(Cn \log^2 n),
\end{equation*}
where $\mathbf{L}'$ is a uniformly random $k \times n$ Latin rectangle. 
Therefore, by linearity of expectation,
\begin{equation} \label{eq:evssqrxr}
    \Expect{SS_{m}(\randLS|_{[k]})} \leq \Expect{SS_{m}(\mathbf{L}')}\exp(Cn \log^2 n).
\end{equation}
Combining \eqref{eq:probrxrssq} and \eqref{eq:evssqrxr}, if $m \geq n^{3/4}$, then by Lemma~\ref{lem:prob-of-large-lss},
\begin{equation*}
    \Expect{SS_m(\randLS)} \leq \exp(-n^{3/2}/10 + 2C n \log^2n) \leq \exp(-n\log^2 n),
\end{equation*}
as desired, and if $m \leq n^{3/4}$, then by Lemma~\ref{lem:prob-of-small-lss} with $c \sqrt{n \log n}$ playing the role of $m$,
    since   
    \begin{equation*}
        3^{c^2 n\log n}\left(c\sqrt{\frac{\log n}{n}}\right)^{c^2 n\log n - 3c\sqrt n \log n} \leq \exp\left(-\frac{1}{4}c^2 n \log^2 n\right),
    \end{equation*}
    we have
\begin{equation*}
    \Expect{SS_m(\randLS)} \leq \exp\left(-\frac{1}{4}c^2 n\log n + 2C n\log^2 n\right) \leq \exp(-n\log^2 n),
\end{equation*}
as desired.
\end{proof}

Theorem~\ref{thm:nolss} now follows easily from Lemma~\ref{lem:rbyrsubsquaremsymbolbound} and Markov's Inequality.

\begin{proof}[Proof of Theorem~\ref{thm:nolss}]
Let $\randLS \in \mathcal{L}_n$ be a uniformly random order-$n$ Latin square.
By Lemma~\ref{lem:rbyrsubsquaremsymbolbound} and linearity of expectation, the expected number of order-$m$ Latin subsquares for $m \geq c\sqrt{n \log n}$ is at most $n\exp(-n\log^2 n)$.   By Markov's Inequality, the probability $\randLS$ contains an order-$m$ Latin subsquare for $m \geq c\sqrt{n \log n}$ is also at most $n\exp(-n\log^2 n)$, as desired.
\end{proof} 

\bibliographystyle{amsabbrv}
\bibliography{ref}

\appendix

\section{Proof of Lemma~\ref{lem:granetjoos2.5}}\label{sec:appendix}
Our proof closely follows the proof of Granet and Joos \cite{GJ23}. First, we need the following lemmas from their paper. 

\begin{lemma}[Kühn and Osthus \cite{KO13}, Lemma 5.2]\label{lem:gjlem2.1}
    Let $0 \leq 1/n \ll \eps \ll \nu \leq \tau \ll \delta \leq 1$, and let $1/n \leq f \leq \eps$. If $D$ is a $(\delta, f)$-almost regular robust ($\nu, \tau)$-outexpander on $n$ vertices, then $D$ contains a spanning $(1 - f^{1/2})\delta n$-regular subdigraph which is a robust $(\nu /2, \tau)$-outexpander. 
\end{lemma}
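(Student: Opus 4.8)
The plan is to pass to the natural bipartite encoding of $D$ and build the desired regular subdigraph by a max-flow/min-cut (equivalently, defect Hall / Gale--Ryser) argument, invoking robust outexpansion only to dispose of a narrow ``middle range'' of potentially violating cuts. Set $d := \lfloor (1-f^{1/2})\delta n\rfloor$ (the discrepancy of less than $1$ from $(1-f^{1/2})\delta n$ is immaterial) and form the balanced bipartite graph $B$ with parts $V^+=\{v^+: v\in V(D)\}$ and $V^-=\{v^-:v\in V(D)\}$, placing an edge $u^+v^-$ for each arc $uv$ of $D$; then a $d$-regular spanning subgraph of $B$ corresponds exactly to a spanning subdigraph of $D$ all of whose in- and out-degrees equal $d$. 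Consider the flow network with source $s$, sink $t$, arcs $s\to v^+$ and $v^-\to t$ of capacity $d$, and arcs $u^+\to v^-$ of capacity $1$ for each edge of $B$. By the max-flow/min-cut theorem (together with integrality), such a subgraph exists provided
\[
    e_D(S,T)\ \geq\ d\bigl(|S|+|T|-n\bigr)\qquad\text{for all }S,T\subseteq V(D),
\]
where $e_D(S,T)$ denotes the number of arcs from $S$ to $T$. So the whole task reduces to verifying this cut condition; note it is trivial when $|S|+|T|\le n$, so assume $s:=|S|+|T|>n$, and write $\rho:=(1+f^{1/2})/(1-f^{1/2})$.

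I would verify the cut condition in three cases. \textbf{(a)} If $\min(|S|,|T|)\le (1-f^{1/2})\delta n$, say $|S|\le(1-f^{1/2})\delta n$: since each vertex of $S$ has at most $n-|T|$ out-neighbours outside $T$, we get $e_D(S,T)\ge |S|\bigl((1-f)\delta n-(n-|T|)\bigr)$, and a short computation using $|S|\le(1-f^{1/2})\delta n$ and $n-|T|<|S|$ (so that $f^{1/2}\ge f$ makes one side nonnegative and the other nonpositive) shows this is at least $d(s-n)$. \textbf{(b)} If $s\ge n\rho$: summing degrees, $e_D(S,T)\ge e_D(S,V)+e_D(V,T)-e(D)\ge s(1-f)\delta n-n(1+f)\delta n$, and an elementary rearrangement shows this is at least $d(s-n)$ precisely when $s\ge n\rho$. \textbf{(c)} Otherwise $|S|,|T|>(1-f^{1/2})\delta n$ and $s<n\rho$. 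Since $\tau\ll\delta$ and $f\le\eps\ll\nu\le\tau$, one checks that this forces $\tau n<|S|\le(1-\tau)n$: indeed $|S|>(1-f^{1/2})\delta n>\tau n$ trivially, while if $|S|>(1-\tau)n$ then $s>(1-\tau)n+(1-f^{1/2})\delta n\ge n\rho$, contradicting the failure of case (b). Now robust outexpansion applies to $S$, so at least $|S|+\nu n$ vertices have at least $\nu n$ in-neighbours in $S$; hence at least $|T|+|S|+\nu n-n>\nu n$ of them lie in $T$, whence $e_D(S,T)\ge \nu n\cdot\nu n=\nu^2 n^2$. On the other hand $d(s-n)\le \delta n(n\rho-n)\le 4\delta f^{1/2}n^2\ll\nu^2 n^2$ since $f\le\eps\ll\nu$. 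This completes the verification, so $D$ has a spanning subdigraph $D'$ with all in- and out-degrees equal to $d$.

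Finally I would check that $D'$ is a robust $(\nu/2,\tau)$-outexpander, which is now essentially automatic. For $S$ with $\tau n\le|S|\le(1-\tau)n$ and any $v\in RN_{\nu,D}^{+}(S)$, passing from $D$ to $D'$ removes at most $(1+f)\delta n-d\le (f+f^{1/2})\delta n\le 2f^{1/2}n\le \nu n/2$ of the (at least $\nu n$) in-neighbours of $v$ in $S$, so $v\in RN_{\nu/2,D'}^{+}(S)$. Therefore $RN_{\nu/2,D'}^{+}(S)\supseteq RN_{\nu,D}^{+}(S)$, giving $|RN_{\nu/2,D'}^{+}(S)|\ge |S|+\nu n\ge |S|+\nu n/2$, as required.

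The main obstacle is case (c) of the cut verification. Almost-regularity alone does \emph{not} guarantee a $d$-regular spanning subdigraph of the required degree — a disconnected, or merely poorly connected, almost-regular digraph is a counterexample — and the content of the lemma is precisely that robust outexpansion eliminates the only dangerous cuts, namely medium-sized sets sending too few arcs to other medium-sized sets. Everything else (the bipartite reformulation, the min-cut reduction, the degree-counting in cases (a) and (b), and the preservation of expansion in the last step) is routine bookkeeping, with the $\sqrt f$-versus-$f$ slack in the degree condition chosen exactly so that the inequalities in cases (a)--(b) and the last step go through.
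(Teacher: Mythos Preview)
The paper does not prove this lemma; it is quoted verbatim from K\"uhn--Osthus \cite[Lemma~5.2]{KO13} and used as a black box in the appendix. There is therefore no ``paper's own proof'' to compare against.

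That said, your argument is correct and is essentially the standard proof of this type of statement (and, as far as I can tell, the one in \cite{KO13}). The bipartite/max-flow reformulation reduces the existence of the $d$-regular spanning subdigraph to the defect-Hall/Ore condition $e_D(S,T)\ge d(|S|+|T|-n)$, and your three-way case split handles it cleanly: cases (a) and (b) use only almost-regularity via degree-counting, while case (c)---the only place expansion is needed---correctly extracts $\nu^2 n^2$ edges from $S$ to $T$ via $|RN_{\nu,D}^+(S)\cap T|\ge s-n+\nu n>\nu n$, and compares this to $d(s-n)\le 4\delta f^{1/2}n^2$, which is negligible since $f\le\eps\ll\nu$. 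The final step, that deleting at most $(f+f^{1/2})\delta n\le \nu n/2$ in-edges at each vertex preserves $(\nu/2,\tau)$-outexpansion, is immediate and correctly stated. One minor remark: in case (a) you should note that the symmetric subcase $|T|\le(1-f^{1/2})\delta n$ is handled by the same computation using in-degrees instead of out-degrees; otherwise the argument is complete.
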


\begin{lemma}[Joos and Kühn \cite{JK22}, Lemma 3.2]\label{lem:jklem3.2}
    Let $(X_t)_{t \geq 0}$ be a Markov chain with state space $[n]$, transition matrix $P$ with $P(i, j) \neq 0$ for all $i, j \in [n]$, and (unique) stationary distribution given by $(\sigma_i)_{i \in [n]}$ with $\sigma_i \neq 0$ for all $i \in [n]$. Let $\alpha:= \textnormal{min}_{i, j, k \in [n]}\frac{P(i,j)}{\sigma_k}$ and $\beta := \textnormal{max}_{i, j, k, \in [n]}\frac{P(i,j)}{\sigma_k}$. If $t \geq 2 + 2\alpha^{-1}\log \beta$ and $i \in [n]$, then
    $$\Prob{X_t = i} = \left(1 \pm \left(1 - \frac{\alpha}{2}\right)^t\right)\sigma_i.$$
\end{lemma}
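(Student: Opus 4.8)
The plan for proving Lemma~\ref{lem:jklem3.2} is to run the classical Markov-chain minorization argument of Doeblin, but to track the ratios $P^t(x,i)/\sigma_i$ rather than only the total variation distance of $X_t$ from the stationary distribution $\sigma=(\sigma_i)_{i\in[n]}$, since the conclusion is an estimate \emph{relative} to $\sigma_i$ (a bare coupling argument would only give a bound of the form $|\Prob{X_t=i}-\sigma_i|\le(1-\alpha)^t$). Throughout we use that $\sigma P=\sigma$ and hence $\sigma P^t=\sigma$ for every $t$.

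The first observations are elementary: for all $x,i$ we have $\alpha\sigma_i\le P(x,i)\le\beta\sigma_i$, and summing over $i$ gives $\alpha\le 1\le\beta$; moreover if $\alpha=1$ then every row of $P$ equals $\sigma$, so $\Prob{X_t=i}=\sigma_i$ for all $t\ge1$ and the conclusion is immediate, and we may assume $\alpha<1$. For the main step, set $M_t:=\max_{x,i}P^t(x,i)/\sigma_i$ and $m_t:=\min_{x,i}P^t(x,i)/\sigma_i$, so that $M_1\le\beta$ and $m_1\ge\alpha$. Writing $P^{t+1}(x,i)=\sum_y P(x,y)P^t(y,i)$ and splitting $P(x,y)=\alpha\sigma_y+\bigl(P(x,y)-\alpha\sigma_y\bigr)$, where the bracketed term is nonnegative and sums over $y$ to $1-\alpha$, and using $\sum_y\sigma_yP^t(y,i)=\sigma_i$, one gets $(\alpha+(1-\alpha)m_t)\sigma_i\le P^{t+1}(x,i)\le(\alpha+(1-\alpha)M_t)\sigma_i$, hence $M_{t+1}-1\le(1-\alpha)(M_t-1)$ and $1-m_{t+1}\le(1-\alpha)(1-m_t)$. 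Iterating yields $M_t-1\le(\beta-1)(1-\alpha)^{t-1}$ and $1-m_t\le(1-\alpha)^t$, and since $\Prob{X_t=i}=\sum_x\Prob{X_0=x}P^t(x,i)$ is a convex combination of numbers in $[m_t\sigma_i,M_t\sigma_i]$, we conclude $\bigl|\Prob{X_t=i}-\sigma_i\bigr|\le\max\bigl\{(\beta-1)(1-\alpha)^{t-1},\,(1-\alpha)^t\bigr\}\,\sigma_i$.

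It remains to show the bracketed quantity is at most $(1-\alpha/2)^t$ once $t\ge 2+2\alpha^{-1}\log\beta$. If $\beta-1\le 1-\alpha$ this is immediate, as then the maximum is $(1-\alpha)^t\le(1-\alpha/2)^t$ (and the hypothesis on $t$ is not needed). Otherwise $\beta>1$, and it suffices to prove $\log(\beta-1)\le\log(1-\alpha/2)+(t-1)\log\tfrac{1-\alpha/2}{1-\alpha}$. For this I would combine the one-line estimates $\log\tfrac{1-\alpha/2}{1-\alpha}\ge\tfrac{\alpha}{2-\alpha}$ (from $\log(1+w)\ge\tfrac{w}{1+w}$ with $w=\tfrac{\alpha}{2(1-\alpha)}$), $\log(1-\alpha/2)\ge-\tfrac{\alpha}{2-\alpha}$, and $\log(\beta-1)<\log\beta\le\tfrac{2\log\beta}{2-\alpha}$, with the hypothesis $t-1\ge 1+2\alpha^{-1}\log\beta$: the two $\tfrac{\alpha}{2-\alpha}$ terms cancel and the right-hand side is at least $\tfrac{2\log\beta}{2-\alpha}$, which dominates $\log(\beta-1)$.

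I expect this final inequality to be the only genuinely delicate point. The minorization recursion naturally produces geometric decay at rate $1-\alpha$, whereas the statement asks for rate $1-\alpha/2$; upgrading the rate \emph{and} simultaneously absorbing the multiplicative constant $\beta-1$ is exactly what the surplus $2\alpha^{-1}\log\beta+2$ steps in the hypothesis are for, and making this work requires the sharp logarithm bounds above rather than cruder ones. Splitting off the easy case $\beta-1\le 1-\alpha$ at the start is what keeps the bookkeeping clean. Everything else — the minorization splitting, the recursion for $M_t$ and $m_t$, and the convex-combination step — is routine.
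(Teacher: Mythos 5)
The paper does not prove this lemma---it is quoted as Lemma~3.2 of Joos and K\"uhn \cite{JK22}---so there is no in-paper proof to compare against, and the review is purely of the correctness of your argument.

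Your proof is correct, and the Doeblin-minorization scheme you use is the standard (and, as far as I am aware, the original) route to this kind of relative mixing estimate. The key design choice---tracking $M_t=\max_{x,i}P^t(x,i)/\sigma_i$ and $m_t=\min_{x,i}P^t(x,i)/\sigma_i$ rather than the total-variation distance---is exactly right, since a TV coupling bound only controls $|\Prob{X_t=i}-\sigma_i|$ in absolute terms and the lemma requires control relative to $\sigma_i$. The recursion $M_{t+1}-1\le(1-\alpha)(M_t-1)$, $1-m_{t+1}\le(1-\alpha)(1-m_t)$ follows cleanly from the split $P(x,y)=\alpha\sigma_y+(P(x,y)-\alpha\sigma_y)$ together with the stationarity identity $\sum_y\sigma_yP^t(y,i)=\sigma_i$, and you correctly note that $P(x,y)\ge\alpha\sigma_y$ and $P(x,y)\le\beta\sigma_y$ follow from the given ``three-index'' definitions of $\alpha$ and $\beta$. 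The convexity step $\Prob{X_t=i}=\sum_x\Prob{X_0=x}P^t(x,i)\in[m_t\sigma_i,M_t\sigma_i]$ is fine as well. The only genuinely delicate point is, as you say, the final numerical estimate converting the natural rate $1-\alpha$ to the stated $1-\alpha/2$ while absorbing the prefactor $\beta-1$, and your chain of inequalities $\log\tfrac{1-\alpha/2}{1-\alpha}\ge\tfrac{\alpha}{2-\alpha}$, $\log(1-\alpha/2)\ge-\tfrac{\alpha}{2-\alpha}$, $\log(\beta-1)<\log\beta\le\tfrac{2\log\beta}{2-\alpha}$ together with $t-1\ge 1+2\alpha^{-1}\log\beta$ does close the gap; the trivial cases $\alpha=1$ and $\beta-1\le1-\alpha$ are also handled correctly. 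I see no gap.
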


\begin{lemma}[Granet and Joos \cite{GJ23}, Lemma 2.4]\label{lem:gjlem2.4}
    Let $0 < 1/n \ll \nu \leq \tau \ll \delta \leq 1$ and $\nu^{-1} + 1 \leq k \leq 2\nu^{-1}$ and $c \in \mathbb{N}$. If $D$ is a $\delta n$-regular robust $(\nu, \tau)$-outexpander on $[n]$, then letting $(X_t)_{t \geq 0}$ be the Markov chain corresponding to a random simple walk on $D$ with transition matrix $P$, the Markov Chain $(Y_t)_{t \geq 0} := (X_{kt + C})_{t \geq 0}$ has transition matrix $P^k$ with the uniform distribution as unique stationary distribution $(\sigma_i)_{i \in [n]}$. Moreover, 
    $$\nu^{k-1}\delta^{-k} \leq \frac{P^k(i, j)}{\sigma_\ell} \leq \delta^{-1}$$
    for any $i, j, \ell \in [n].$
\end{lemma}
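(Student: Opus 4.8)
The plan is to prove the two displayed bounds on $P^k(i,j)/\sigma_\ell$ directly; the remaining assertions then come for free. Throughout I would write $P(u,v)=1/(\delta n)$ when $uv$ is an arc of $D$ and $P(u,v)=0$ otherwise, so that $P^k(u,v)=N_k(u,v)/(\delta n)^k$, where $N_k(u,v)$ is the number of walks of length $k$ from $u$ to $v$ in $D$. Since $D$ is $\delta n$-regular, the uniform vector $\sigma\equiv 1/n$ satisfies $(\sigma P)(v)=\tfrac1n\cdot\tfrac1{\delta n}\,|N^-_D(v)|=\tfrac1n$, so $\sigma$ is stationary for $P$ and hence for $P^k$; and once the lower bound below gives $P^k(i,j)>0$ for all $i,j$, the transition graph of $P^k$ is complete, so $P^k$ is irreducible and $\sigma$ is its unique stationary distribution. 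That $(Y_t)=(X_{kt+c})$ has transition matrix $P^k$ is immediate from the Markov property. The upper bound is then a one-liner: $P^k(i,j)=\sum_w P^{k-1}(i,w)P(w,j)\le\bigl(\max_w P(w,j)\bigr)\sum_w P^{k-1}(i,w)=1/(\delta n)$, so $P^k(i,j)/\sigma_\ell=nP^k(i,j)\le\delta^{-1}$.

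The substance is the lower bound, which reduces to showing $N_k(i,j)\ge(\nu n)^{k-1}$ for all $i,j$ (then $P^k(i,j)=N_k(i,j)/(\delta n)^k\ge\nu^{k-1}\delta^{-k}/n=\nu^{k-1}\delta^{-k}\sigma_\ell$). Fix $i$ and put $f_t(v):=N_t(i,v)$, so $f_0=\mathbf{1}_{\{i\}}$ and $f_{t+1}(v)=\sum_{u\in N^-_D(v)}f_t(u)$. I would construct sets $T_1,\dots,T_{k-1}\subseteq V(D)$ with $f_t(v)\ge(\nu n)^{t-1}$ for every $v\in T_t$: take $T_1:=N^+_D(i)$, so $|T_1|=\delta n$ and $f_1\equiv 1$ on $T_1$; given $T_t$, choose $T_t'\subseteq T_t$ of size $\min\{|T_t|,\lfloor(1-\tau)n\rfloor\}$ and set $T_{t+1}:=RN_{\nu,D}^{+}(T_t')$. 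Robust outexpansion gives $|T_{t+1}|\ge|T_t'|+\nu n$, and every $v\in T_{t+1}$ has at least $\nu n$ in-neighbours in $T_t'\subseteq T_t$, so $f_{t+1}(v)\ge\nu n\cdot(\nu n)^{t-1}=(\nu n)^t$. Passing to the subset $T_t'$ keeps us inside the window $[\tau n,(1-\tau)n]$ where robust outexpansion applies and forces $|T_t|$ to grow by at least $\nu n$ each step until it exceeds $(1-\tau)n$, after which it stays above $(1-\tau)n$; since $|T_1|=\delta n$ this happens within $\nu^{-1}$ steps, so $|T_{k-1}|>(1-\tau)n$ using $k-1\ge\nu^{-1}$, and as $\tau\ll\delta$ this yields $|T_{k-1}|>(1-\delta+\nu)n$. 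Finally $|N^-_D(j)|=\delta n$ and $|V(D)\setminus T_{k-1}|<(\delta-\nu)n$, so $|N^-_D(j)\cap T_{k-1}|>\nu n$, whence $N_k(i,j)=f_k(j)\ge\sum_{u\in N^-_D(j)\cap T_{k-1}}f_{k-1}(u)\ge\nu n\cdot(\nu n)^{k-2}=(\nu n)^{k-1}$. (The reason for running the count entirely forward and treating the last step by intersecting with $N^-_D(j)$, rather than splitting the walk into a forward and a backward half, is that the backward half would need robust inexpansion, which is not assumed.)

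I expect the only delicate point to be the size bookkeeping in the middle paragraph: verifying that the $T_t$ remain in the size window where robust outexpansion is usable, and that $|T_t|$ genuinely reaches size $>(1-\delta+\nu)n$ by step $k-1$. This is exactly where the hypothesis $k\ge\nu^{-1}+1$ is used, and making the additive constants line up relies on the initial set $T_1$ already having linear size $\delta n$ rather than size $o(n)$. The hierarchy $1/n\ll\nu\le\tau\ll\delta\le1$ is invoked repeatedly to absorb floor and rounding errors and to guarantee inequalities such as $\tau<1/2$, $\delta-\nu-\tau>0$, and $(1-\tau)n>(1-\delta+\nu)n$; the upper bound $k\le2\nu^{-1}$ is not needed for this argument.
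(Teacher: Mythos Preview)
The paper does not prove this lemma; it is quoted verbatim from Granet and Joos~\cite{GJ23} and used as a black box in the proof of Lemma~\ref{lem:granetjoos2.5}. So there is no in-paper argument to compare against.

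Your proof is correct and is essentially the natural argument. The upper bound and the stationarity/uniqueness assertions are routine, and your lower bound via the growing ``robust reachability'' sets $T_t$ is the standard way to exploit robust outexpansion to count walks. One minor bookkeeping remark: from $T_1$ to $T_{k-1}$ you perform $k-2$ iterations, not $k-1$, so the line ``this happens within $\nu^{-1}$ steps, so $|T_{k-1}|>(1-\tau)n$ using $k-1\ge\nu^{-1}$'' is slightly loose. The clean justification is that if $|T_t|\le(1-\tau)n$ for all $t\le k-1$ then $|T_{k-1}|\ge\delta n+(k-2)\nu n\ge\delta n+(1-\nu)n>n$ (using $k-2\ge\nu^{-1}-1$ and $\delta>\nu$), a contradiction; hence $|T_{k-1}|>(1-\tau)n$ after all. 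Everything else is in order, including the decision to run the count forward and intersect with $N^-_D(j)$ at the last step to avoid needing robust inexpansion.
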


Now we can prove Lemma~\ref{lem:granetjoos2.5}, which we restate for the reader's convenience.
\granetjoos*

\begin{proof}
        We only show that there are $(1 \pm \gamma)\delta^\ell n^{\ell - 1}$ $(u,v)$-paths for distinct $u, v \in V(G)$. The proof that each $u \in V(G)$ is in $(1 \pm \gamma)\delta^\ell n^{\ell - 1}$ length-$\ell$ cycles is nearly identical, so we omit it. By Lemma~\ref{lem:gjlem2.1}, $G$ contains a spanning $(1 - f^{1/2})\delta n$-regular subdigraph $D$ that is a robust $(\nu/2, \tau)$-outexpander. Note that $d^+_G(x) - d^+_D(x), d^-_G(x) - d^-_D(x) \leq 2\delta nf^{1/2}$ for every $x \in V(G)$. Therefore, the number of $(u, v)$-paths of length $\ell$ that contain at least one edge in $G\setminus D$ is at most $\ell \cdot 2 \delta nf^{1/2} \cdot (1 + f)^{\ell-2}\delta^{\ell-2}n^{\ell-2} = 2\ell f^{1/2}\delta^{\ell-1}n^{\ell-1}(1 + f)^{\ell - 2} \leq 2\eps^{1/2} \ell (1+\eps)^{\ell -2} \delta^{\ell - 1}n^{\ell - 1} \leq \gamma \delta^{\ell}n^{\ell - 1}/2$ because $f \leq \eps \ll 1/\ell \ll \delta$. 

    Note that $D$ contains $(1 - f^{1/2})^\ell\delta^\ell n^\ell$ walks of length $\ell$ starting at $u$. Let $k := \lceil 2\nu^{-1} \rceil$ + 1 and let $(X_t)_{t \geq 0}$ be the Markov chain corresponding to a simple random walk on $D$ starting at $u$, and define $(Y_t)_{t\geq 0} := (X_{\ell - k\lfloor \frac{\ell}{k} \rfloor+kt} )_{t \geq 0} $. Note that $\Prob{X_\ell = v} = \Prob{Y_{\lfloor\frac{\ell}{k}\rfloor}  = v}$, so the number of $(u, v)$-walks of length $\ell$ in $D$ is equal to
    $$\Prob{Y_{\lfloor \frac{\ell}{k}\rfloor} = v} \cdot (1 - f^{1/2})\delta^\ell n^\ell.$$
    By Lemmas \ref{lem:jklem3.2} and \ref{lem:gjlem2.4} (applied with $\frac{\nu}{2}$ playing the role of $\nu$ and $(1 - f^{1/2})\delta$ as $\delta$),
    $$\Prob{Y_{\lfloor \frac{\ell}{k}\rfloor} = v} = \left(1 \pm \left(1 - \frac{\nu^{k-1}\delta^{-k}}{2^{k}(1-f^{1/2})^{k}}\right)^{\lfloor \frac{\ell}{k}\rfloor}\right)n^{-1}.$$
    Since $f \ll 1/\ell \ll \gamma \ll \nu \ll \delta$,
    $$\left(1 - \frac{\nu^{k-1}\delta^{-k}}{2^k(1 - f^{1/2})^{k}}\right)^{\lfloor \frac{\ell}{k} \rfloor} \leq \frac{\gamma}{5}.$$

Combining the inequalities above, we have
    $$\Prob{Y_{\lfloor \frac{\ell}{k}\rfloor} = v} \cdot (1 - f^{1/2})\delta^\ell n^\ell = (1 \pm \gamma/4)\delta^\ell n^{\ell - 1}$$
$(u, v)$-walks of length $\ell$ in $D$. The number of these walks which are not paths is at most $\ell n^{\ell - 2} \leq \gamma \delta^{\ell}n^{\ell-1}/4$, which implies there are $(1 \pm \gamma/2)\delta^\ell n^{\ell - 1}$ $(u,v)$-paths of length $\ell$ in $D$. Hence, considering our bounds for the number of $(u,v)$-paths in $D$ and $(u,v)$-paths in $G$ containing at least one edge of $G \setminus D$, we see that $G$ contains $(1 \pm \gamma)\delta^\ell n^{\ell - 1}$ $(u,v)$-paths of length $\ell$, as desired.    
\end{proof}
\end{document}